\renewcommand{\subset}{\subseteq}
\tikzset{degil/.style={
            decoration={markings,
            mark= at position 0.5 with {
                  \node[transform shape] (tempnode) {$/$};
                  }
              },
              postaction={decorate}
}
}
\setlist*[enumerate]{label={\normalfont(\arabic*)}}
\DeclareMathOperator{\cl}{cl}
\DeclareMathOperator{\inter}{int}
\newcommand{\A}{\mathcal{A}}
\def\moverlay{\mathpalette\mov@rlay}
\def\mov@rlay#1#2{\leavevmode\vtop{%
   \baselineskip\z@skip \lineskiplimit-\maxdimen
   \ialign{\hfil$\m@th#1##$\hfil\cr#2\crcr}}}
\newcommand{\charfusion}[3][\mathord]{
    #1{\ifx#1\mathop\vphantom{#2}\fi
        \mathpalette\mov@rlay{#2\cr#3}
      }
    \ifx#1\mathop\expandafter\displaylimits\fi}
\numberwithin{equation}{section}
\theoremstyle{plain}
\newtheorem{teo}[equation]{Theorem}
\newtheorem{prop}[equation]{Proposition}
\newtheorem{cor}[equation]{Corollary}
\newtheorem{defin}[equation]{Definition}
\newtheorem{question}[equation]{Question}
\theoremstyle{remark}
\theoremstyle{definition}
\title{Special sets of reals and weak forms of normality \\on Isbell-Mrówka spaces}
\author[1]{Vinicius de Oliveira Rodrigues%
\thanks{Electronic address: \texttt{vinior@ime.usp.br}; Corresponding author}}
\author[1]{Victor dos Santos Ronchim%
 \thanks{Electronic address: \texttt{vronchim@ime.usp.br}}}
\author[2]{Paul Szeptycki%
\thanks{Electronic address: \texttt{szeptyck@yorku.ca}}}
\affil[1]{Institute of Mathematics and Statistics, University of São Paulo}
\affil[2]{Department of Mathematics and Statistics, York University}
\date{\today}
\begin{document}
\newpage

\maketitle
\begin{abstract}
We recall some classical results relating normality and some natural weakenings of normality in $\Psi$-spaces over almost disjoint families of branches in the Cantor tree to special sets of reals like $Q$-sets, $\lambda$-sets and $\sigma$-sets. We introduce a new class of special sets of reals which corresponds the corresponding almost disjoint family of branches being $\aleph_0$-separated. This new class fits between $\lambda$-sets and perfectly meager sets. We also discuss conditions for an almost disjoint family $\mathcal A$ being potentially almost-normal (pseudonormal), in the sense that $\mathcal A$ is almost-normal (pseudonormal) in some c.c.c. forcing extension.
\vspace{1em}
    
    \noindent\emph{2020 Mathematics Subject Classification:} Primary 54D15, 54D80
    
    \noindent\emph{Keywords:} Isbell-Mrówka spaces, almost disjoint families, almost-normal, weak $\lambda$-set

\end{abstract}

\section{Introduction}
Given a countable infinite set $N$, an almost disjoint family (on $N$) is a infinite collection of infinite subsets of $N$ whose pairwise intersections are finite. Throughout this paper we assume that $[N]^\omega\cap N=\emptyset$. A MAD family (maximal almost disjoint family) is an almost disjoint family which is not properly contained in any other family. It is well known that there are almost disjoint families of size $\mathfrak c$ \cite{blass2010}.

Each almost disjoint family $\mathcal A$ on $N$ is associated to the \textit{Isbell-Mrówka} space of $\mathcal A$, also called the $Psi$-space of $\mathcal A$ and denoted by $\Psi(\mathcal A)$. This space is the set $N\cup \mathcal A$, where $N$ is open and discrete and for each $a \in \mathcal A$, the sets of the form $\{a\}\cup(a\setminus F)$, where $F\subseteq N$ is finite, form an open basis for the point $a$. It is easy to verify that $\mathcal A$ is a Hausdorff, locally compact (and therefore Tychonoff) zero-dimensional non-compact separable Moore topological space.

 The topological properties of $\Psi(\mathcal A)$ often depend on the combinatorial properties of $\mathcal A$. For instance, $\Psi(\mathcal A)$ is pseudocompact iff $\mathcal A$ is MAD and $\Psi(\mathcal A)$ is metrizable iff $\mathcal A$ is countable. $\Psi$-spaces provide a rich source of counter-examples to many topoligical questions including questions about convergence and compactness, and there are many interesting and difficult problems about the combinatorial properties of almost disjoint families that are interesting in their own right. As good introductions to this field of study we mention \cite{Hrusak2014} and \cite{Hernandez-Hernandez2018}.
 
 To improve our notation, we say that an almost disjoint family $\mathcal A$ has a certain topological property if and only if $\Psi(\mathcal A)$ has this topological property. So the statement ``$\mathcal A$ is normal'' means the same as ``$\Psi(\mathcal A)$ is normal''.
 
 The normality of Isbell-Mrówka spaces has been extensively studied, perhaps initially in relation to the Normal Moore Space Problem. These problems are closely related to the concept of $Q$-set, a special kind of sets of reals:
 
 \begin{defin}
 Let $X$ be a Polish space. We say $A\subseteq X$ is a $Q$-set iff $A$ is uncountable and every subset of $A$ is a $G_\delta$ relatively to $A$. We say $A\subseteq X$ is a $\lambda$-set iff $A$ is uncountable and every countable subset of $A$ is a $G_\delta$ relatively to $A$.
 \end{defin}
 
We have the following classical result. We give \cite{qsetsnormality} as a reference.
\begin{teo} The following are equivalent:
\begin{enumerate}
    \item There is a $Q$-set.
    \item There is an uncountable normal Isbell-Mrówka space.
    \item There is a separable normal non-metrizable Moore space.
\end{enumerate}
\end{teo}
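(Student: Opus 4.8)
The plan is to route the equivalence through the classical combinatorial characterization of normality for $\Psi$-spaces, treating $(1)\Leftrightarrow(2)$ first, $(2)\Rightarrow(3)$ as a triviality, and $(3)\Rightarrow(1)$ as the genuinely hard step. The engine is the following standard fact, which I would state and prove first: $\Psi(\mathcal A)$ is normal iff for every $\mathcal B\subseteq\mathcal A$ there is $D\subseteq N$ with $a\subseteq^* D$ for all $a\in\mathcal B$ and $a\cap D$ finite for all $a\in\mathcal A\setminus\mathcal B$. For the direction I actually use, given such a $D$ one checks that $U=\mathcal B\cup D$ and $V=(\mathcal A\setminus\mathcal B)\cup(N\setminus D)$ are disjoint open sets separating the closed discrete sets $\mathcal B$ and $\mathcal A\setminus\mathcal B$; conversely, disjoint open $U\supseteq\mathcal B$, $V\supseteq\mathcal A\setminus\mathcal B$ yield $D=U\cap N$. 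Since every subset of $\mathcal A$ is closed in $\Psi(\mathcal A)$, this reduces normality entirely to the combinatorial separation of subfamilies.

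For $(2)\Rightarrow(1)$ I would embed $\mathcal A$ into the Polish space $2^N$ via $a\mapsto\chi_a$ and set $A=\{\chi_a:a\in\mathcal A\}$, an uncountable set of reals. Given $S=\{\chi_a:a\in\mathcal B\}\subseteq A$, normality supplies a separating $D\subseteq N$ as above; then the $F_\sigma$ set $E=\{y\in 2^N:\{n\notin D:y(n)=1\}\text{ is finite}\}$ satisfies $E\cap A=S$, because $a\subseteq^* D$ puts $\chi_a$ in $E$ while $a\cap D$ finite (hence $a\setminus D$ infinite) keeps $\chi_a$ out. Thus every subset of $A$ is relatively $F_\sigma$, and by complementation relatively $G_\delta$, so $A$ is a $Q$-set. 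For $(1)\Rightarrow(2)$ I would run this in reverse using branches: from a $Q$-set $A\subseteq 2^\omega$ form the uncountable branch family $\mathcal A_A=\{b_x:x\in A\}$ on $N=2^{<\omega}$, where $b_x=\{x\restriction n:n\in\omega\}$. Given $\mathcal B=\{b_x:x\in B\}$, write both $B=A\cap\bigcup_nC_n$ and $A\setminus B=A\cap\bigcup_nC_n'$ with $C_n,C_n'\subseteq 2^\omega$ closed and increasing (possible since in a $Q$-set every subset is relatively $F_\sigma$), arranged so that $C_n\cap(A\setminus B)=\emptyset$ and $C_n'\cap B=\emptyset$. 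Letting $T_n,T_n'$ be the trees of $C_n,C_n'$ and $n(s),n'(s)$ the least indices for which $s$ enters $T_n,T_n'$, I would put $D=\{s\in 2^{<\omega}:n(s)<n'(s)\}$. The verification is an index race: for $x\in B$ the value $n(x\restriction m)$ is bounded while $n'(x\restriction m)\to\infty$, so $b_x\subseteq^* D$; for $x\in A\setminus B$ the roles reverse and $b_x\cap D$ is finite. Hence $\Psi(\mathcal A_A)$ is normal and uncountable.

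The implication $(2)\Rightarrow(3)$ is immediate: any uncountable $\Psi(\mathcal A)$ is a separable Moore space (the countable dense set $N$ consists of isolated points), it is non-metrizable because $\mathcal A$ is uncountable, and it is normal by hypothesis, so it is exactly a separable normal non-metrizable Moore space.

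The hard part is $(3)\Rightarrow(1)$, and I expect it to be the main obstacle. Starting from a separable normal non-metrizable Moore space $X$ with countable dense set $\{d_k\}$ and a nested development $(\mathcal G_n)_n$, I would first invoke Bing's theorem (a Moore space is metrizable iff collectionwise normal) to conclude that $X$ is not collectionwise normal, and extract an uncountable closed discrete subspace $Y=\{y_\alpha\}$ whose points cannot be simultaneously expanded to disjoint open sets; uncountability is forced because normality already separates countable discrete closed families. Encoding each $y_\alpha$ by its development trace on the dense set, $y_\alpha\mapsto p_\alpha$ where $p_\alpha(n,k)=1$ iff $d_k\in\operatorname{st}(y_\alpha,\mathcal G_n)$, produces an uncountable $A=\{p_\alpha\}\subseteq 2^{\omega\times\omega}$ (after thinning $Y$ so the trace is injective). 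The crux is to show $A$ is a $Q$-set: given any $Y_0\subseteq Y$, normality separates the disjoint closed sets $Y_0$ and $Y\setminus Y_0$ by disjoint open $U,V$; reading the inclusions $\operatorname{st}(y_\alpha,\mathcal G_n)\subseteq U$ off the countable development and dense set expresses the corresponding subset of $A$ as a relative $G_\delta$. The delicate points, where the real work lies, are guaranteeing that $Y$ is uncountable, that the trace map is injective, and that the open separation translates faithfully into a countable union or intersection of basic clopen conditions on the $p_\alpha$; this is the step I would expect to require the most care.
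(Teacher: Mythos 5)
Your handling of $(1)\Leftrightarrow(2)$ and $(2)\Rightarrow(3)$ is correct and coincides with what the paper does: the paper treats this theorem as a classical cited result and only sketches $(1)\leftrightarrow(2)$, via exactly your two constructions --- the branch family $\mathcal A_X$ on $2^{<\omega}$ together with the partitioner characterization of normality (this is Proposition~\ref{prop: fsigma sse separa} and its corollary, and your ``index race'' is the argument of \cite[Lemma 4.3]{vinichim} which the paper invokes there), and the characteristic-function embedding $a\mapsto\chi_a$ for the converse. The implication $(2)\Rightarrow(3)$ is indeed immediate. So the only direction in which you go beyond the paper is $(3)\Rightarrow(1)$, which the paper does not prove at all.

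Your plan for $(3)\Rightarrow(1)$ is essentially Heath's argument and can be completed, but two points need repair. First, Bing's theorem gives an uncountable discrete family of \emph{closed sets} that cannot be expanded to disjoint open sets (uncountable because, as you say, countable discrete closed families in a normal space can always be separated); what you then want is simply to select one point from each member to obtain an uncountable closed discrete subspace $Y$. Your further claim that the \emph{points} of $Y$ ``cannot be simultaneously expanded to disjoint open sets'' does not follow and is false in general (failure of collectionwise normality does not imply failure of collectionwise Hausdorffness) --- but it is also never used, so it should be dropped. (Alternatively: Moore spaces are subparacompact, subparacompact spaces of countable extent are Lindel\"of, and Lindel\"of Moore spaces are second countable, hence metrizable; so a non-metrizable separable Moore space must contain an uncountable closed discrete subspace.) Second, the step you flag as delicate really is the crux and is left unexecuted: given disjoint open $U\supseteq Y_0$ and $V\supseteq Y\setminus Y_0$, the candidate set $\bigcup_{n}\{p:\forall k\,(p(n,k)=1\rightarrow d_k\in U)\}$ is a relative $F_\sigma$ whose trace on $A$ is $\{p_\alpha: y_\alpha\in Y_0\}$, but the ``only if'' direction requires the observation that for $y_\alpha\in V$ the nonempty open set $\operatorname{st}(y_\alpha,\mathcal G_n)\cap V$ contains some $d_k$, which then lies in $\operatorname{st}(y_\alpha,\mathcal G_n)\setminus U$; without the disjoint open $V$ the trace of $U$ on the dense set does not determine whether $\operatorname{st}(y_\alpha,\mathcal G_n)\subseteq U$. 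With that observation every subset of $A$ is a relative $F_\sigma$, hence by symmetry a relative $G_\delta$, and injectivity of the trace map already follows from regularity plus the development (disjoint stars each contain a dense point), so no thinning of $Y$ is needed.
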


The proof (1)$\leftrightarrow$(2) of the proposition above is sketched as follows: given a set $X\subseteq 2^\omega$ of reals, we may define an almost disjoint family on $N=2^{<\omega}$ named $\mathcal A_X$ consisting of the sets $a_x=\{x|_n: n \in \omega\}$ for $x \in X$. Then it is shown that $\mathcal A_X$ is normal iff $X$ is a $Q$-set. Moreover, it is shown that if $\mathcal A$ is a normal almost disjoint family on $\omega$, then it is a $Q$-set of the Polish space $\mathcal P(\omega)$.

Of course, the existence of a $Q$-set (and so all the above statements) are independent of the axioms of ZFC. For instance, under CH (2) fails since such a space would be a separable normal space with a closed discrete subspace of size $\mathfrak c$, which violates Jones's Lemma, and under $\mathfrak p=\mathfrak c$, every set of reals of size $<\mathfrak c=\mathfrak p$ is a $Q$-set (since the natural poset for making a subset of a given set a relative $G_\delta$ is $\sigma$-centered, as mentioned in \cite{Brendle}).

The main contribution of this paper is the introduction of a new class of special subsets of the reals which we call \textit{weak} $\lambda$-set. For information on the classical special subsets of reals, such as $\sigma$-sets, $\lambda$-sets and $Q$-sets we refer to \cite{miller1984special}.

In the past decades several weakenings of normality have been proposed and studied. A topological space is pseudonormal if for every closed set $K$ and every countable closed set $F$ there exist disjoint open sets $U, V$ such that $F\subseteq U$, $K\subseteq V$. It is mentioned in \cite{qsetsnormality}, while a $Q$-set $X\subseteq 2^\omega$ gives a normal $\Psi$-space, we have $X\subseteq 2^\omega$ is a $\lambda$-set if and only if $\mathcal A_X$ is pseudonormal. Conversely, if $\mathcal A$ is a pseudonormal almost disjoint family, then it is a $\lambda$-set of $\mathcal P(\omega)$. The proofs are similar.

Other weakenings of normality have been studied in the realm of Isbell-Mrówka spaces. We cite \cite{paulsergio}, \cite{vinichim} as recent works.

In \cite{paulsergio}, several weakenings of normality in Isbell-Mrówka spaces have been studied including the notion of almost-normal.  We say that a topological space is almost-normal iff for every closed set $K$ and every regular closed set $F$ disjoint from $K$ there exists two disjoint open sets $U$, $V$ such that $K\subseteq U$ and $F\subseteq V$. They asked if there exists an almost normal Isbell-Mrówka space which is not normal, or, more strongly, if there exists a MAD family whose Isbell-Mrówka space is almost-normal. In \cite{vinichim}, V. Rodrigues and V. Ronchim used forcing to show that the answer to the former is consistently positive with both CH and $\neg$CH. The latter is still open.

In \cite{vinichim}, in order to produce the mentioned example something which was defined as ``almost $Q$-set'' was used. This definition, which will be stated in the next section, was designed to work with almost-normality in the same way as $Q$-sets work with normality, that is, in a way such that $X\subseteq 2^{\omega}$ is an almost $Q$-set if and only if $\mathcal A_X$ is almost-normal. In this paper, we show that this class of sets actually is the well known class of the $\sigma$-sets of reals (see \cite{miller1984special}).

In \cite{paulsergio}, P. Szeptycki and S. Garcia-Balan defined the notion of an strongly $\aleph_0$-separated almost disjoint family, which is an almost disjoint family where every pair of disjoint closed countable sets can be separated by a clopen set. They proved that every almost-normal almost disjoint family is strongly $\aleph_0$-separated. In this paper we introduce a new class of special subsets of the reals which we call weak $\lambda$-sets which fits between the class of $\lambda$-sets and the class of perfectly meager sets ($X\subseteq 2^\omega$ is perfectly meager if and only if its intersection with every perfect set $P$ is meager in $P$). It follows that $X\subseteq 2^\omega$ is weak $\lambda$ if and only if $\mathcal A_X$ is strongly $\aleph_0$-separated, and that if an almost disjoint family $\mathcal A$ is strongly $\aleph_0$-separated then it is a weak $\lambda$-set of $\mathcal P(\omega)$  Consistently, there are perfectly meager sets which are not weak $\lambda$-sets. We do not know if there is a weak $\lambda$-set which is not a $\lambda$-set.
\section{Almost-normality and \texorpdfstring{$\sigma$}{sigma}-sets}
As mentioned in the introduction, throughout this paper we will give special attention to the almost disjoint families of branches in the Cantor tree $2^{<\omega}$:

\begin{defin}
Let $X\subseteq 2^\omega$, the almost disjoint family induced by $X$ (over $N=2^{<\omega}$) is the family $\A_X=\{ a_x: x\in X\}$, where $a_x=\{ x|_n: n\in\omega\}$ for each $x\in X$.

We denote $\widehat{X} = \{x|_n : n\in\omega, x\in X\}$ and, for a subset $K\subseteq \Psi(\mathcal A_X)$, we define $\langle K\rangle_X = \{x\in X: a_x\in K\}$. 
\end{defin}

Notice that almost disjoint families of branches are never MAD because each element of $\mathcal A_X$ intersects each infinite anti-chain in $2^{<\omega}$ at most in one point. It is worth mentioning that for this special class of almost disjoint families  some topological properties from $\Psi(\mathcal A_X)$ can be characterized in terms of topological properties the set $X$. The proposition below is probably folklore, but we include the proof for completeness.

\begin{prop}[Folklore]\label{prop: fsigma sse separa}
Given $X\subseteq 2^\omega$ and $Y\subseteq X$. The following are equivalent:
\begin{enumerate}
    \item $\mathcal A_Y$ and $\mathcal A_{X\setminus Y}$ can be separated in $\A_X$;
    \item $Y$ and $X\setminus Y$ are $F_\sigma$ in $X$.
\end{enumerate}
\end{prop}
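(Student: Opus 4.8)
The plan is to first reduce the topological separation to its standard combinatorial form and then handle the two implications separately, the substance lying entirely in $(2)\Rightarrow(1)$. I would begin by recalling that $\mathcal A_Y$ and $\mathcal A_{X\setminus Y}$ can be separated in $\A_X$ if and only if there is a set $S\subseteq 2^{<\omega}$ with $a_y\subseteq^{*}S$ for every $y\in Y$ and $a_z\cap S$ finite for every $z\in X\setminus Y$: given such an $S$, the open sets $S\cup\mathcal A_Y$ and $(2^{<\omega}\setminus S)\cup\mathcal A_{X\setminus Y}$ separate, and conversely $S=U\cap 2^{<\omega}$ works for any open $U\supseteq\mathcal A_Y$ disjoint from an open $V\supseteq\mathcal A_{X\setminus Y}$. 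The elementary fact driving everything is that, for any $S\subseteq 2^{<\omega}$, the set $B_S=\{x\in 2^\omega:\ x|_k\in S\text{ for all but finitely many }k\}$ is $F_\sigma$ in $2^\omega$, since $B_S=\bigcup_{j}\bigcap_{k\ge j}\{x:x|_k\in S\}$ and each $\{x:x|_k\in S\}$ is clopen.

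For $(1)\Rightarrow(2)$ I would fix a separating $S$. Then $a_y\subseteq^{*}S$ gives $Y\subseteq B_S$, while $a_z\cap S$ finite gives $z\notin B_S$ for $z\in X\setminus Y$; hence $Y=X\cap B_S$ is $F_\sigma$ in $X$. Replacing $S$ by $2^{<\omega}\setminus S$ and repeating shows that $X\setminus Y$ is $F_\sigma$ in $X$ as well.

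For $(2)\Rightarrow(1)$ I would write $Y=\bigcup_n F_n$ and $X\setminus Y=\bigcup_n G_n$ with $F_n,G_n$ closed in $X$ and increasing in $n$. For a node $t\in 2^{<\omega}$ let $\nu_Y(t)$ be the least $n$ with $[t]\cap\overline{F_n}\ne\emptyset$ (closures in $2^\omega$), with $\nu_Y(t)=\infty$ if there is none, and define $\nu_Z(t)$ analogously from the $G_n$. Setting $S=\{t:\ \nu_Y(t)\le\nu_Z(t)\}$, I claim $S$ separates. If $x\in Y$ then $x\in F_{n_0}$ for some $n_0$, so $x\in\overline{F_n}$ for all $n\ge n_0$ and thus $\nu_Y(x|_k)\le n_0$ for every $k$; on the other hand, since each $G_n$ is closed in $X$ we have $\overline{G_n}\cap X=G_n$, so $x\notin\overline{G_n}$ for every $n$, whence for each fixed $n$ the clopen set $[x|_k]$ misses $\overline{G_n}$ once $k$ is large, giving $\nu_Z(x|_k)\to\infty$. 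Therefore $\nu_Y(x|_k)\le n_0<\nu_Z(x|_k)$ for all large $k$, i.e. $x|_k\in S$ eventually and $a_x\subseteq^{*}S$. The symmetric argument, using that each $F_n$ is closed in $X$, shows that for $x\in X\setminus Y$ one has $\nu_Z(x|_k)$ bounded and $\nu_Y(x|_k)\to\infty$, so $x|_k\notin S$ eventually and $a_x\cap S$ is finite.

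The main obstacle is exactly what this construction is designed to circumvent: the pieces $F_n,G_n$ are closed in $X$ but not compact, so their closures $\overline{F_n},\overline{G_n}$ in $2^\omega$ may meet (necessarily inside $2^\omega\setminus X$), and consequently no single clopen subset of $2^\omega$ can separate $F_n$ from $G_n$ outright. The branch space only allows separation ``in the limit'', and the competition between $\nu_Y$ and $\nu_Z$ is precisely what converts the two $F_\sigma$ hypotheses into the required eventual behaviour along each branch. I would stress that both hypotheses are genuinely used: the boundedness of one index comes from that side's $F_\sigma$ decomposition, while the divergence of the other index requires the opposite side's pieces to be closed in $X$.
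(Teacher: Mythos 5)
Your proof is correct. The direction $(1)\Rightarrow(2)$ is essentially identical to the paper's: both extract a separating set $S\subseteq 2^{<\omega}$ and observe that $\{x\in X: a_x\subseteq^* S\}=\bigcup_n\bigcap_{m\geq n}\{x\in X: x|_m\in S\}$ is $F_\sigma$, then pass to the complement of $S$ for $X\setminus Y$.

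The difference is in $(2)\Rightarrow(1)$, which the paper does not prove at all --- it simply cites Lemma 4.3 of the reference \cite{vinichim} --- whereas you supply a complete, self-contained construction. Your argument (write $Y=\bigcup_n F_n$ and $X\setminus Y=\bigcup_n G_n$ as increasing unions of relatively closed sets, assign to each node $t$ the first indices $\nu_Y(t)$, $\nu_Z(t)$ at which $[t]$ meets $\overline{F_n}$ resp.\ $\overline{G_n}$, and put $t$ into $S$ when $\nu_Y(t)\leq\nu_Z(t)$) is the standard ``index race'' used in the classical proof that $\mathcal A_X$ is normal iff $X$ is a $Q$-set, and all the steps check out: for $x\in Y$ the index $\nu_Y(x|_k)$ is bounded while $\nu_Z(x|_k)\to\infty$ because $x$ lies outside each $\overline{G_n}$ (here relative closedness of $G_n$ in $X$ is exactly what is needed), and symmetrically for $x\in X\setminus Y$. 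Your preliminary reduction of open-set separation in $\Psi(\mathcal A_X)$ to the existence of such an $S$ is also correct and matches the paper's implicit use of partitioners. In short, your write-up proves strictly more than the paper's, since it makes the outsourced direction explicit; the closing remarks about why no single clopen set can separate $F_n$ from $G_n$ correctly identify where the real content lies.
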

\begin{proof}
$(1)\implies(2)$: Let $Z\subseteq 2^{<\omega}$ a partitioner for $\mathcal A_Y$ and $\mathcal A_{X\setminus Y}$ such that for all $y\in Y$ and $x\in X\setminus Y$, $a_y\subseteq^* Z$ and $a_x\cap Z=^*\emptyset$. It follows that:
\[
Y= \{x\in X: a_y\subseteq^* Z\} = \bigcup_{n\in\omega} \bigcap_{m\geq n} \underbrace{ \{y\in X: y|_m\in Z\} }_{\text{closed in $2^\omega$ }}.
\]

Notice that $Z_0 = 2^{<\omega}\setminus Z$ is a partitioner for $\mathcal A_Y$ and $\mathcal A_{X\setminus Y}$ such that $A_x\subseteq^* Z_0$ iff $x\in X\setminus Y$, one concludes that $X\setminus Y$ is also an $F_\sigma$ set of $X$.

$(2)\implies(1)$: It follows from the argument in \cite[Lemma 4.3]{vinichim}.
\end{proof}

Given an almost disjoint family $\mathcal A$ over $N$, we say that a set $J\subseteq N$ is a partitioner for disjoint subfamilies $\mathcal B,\mathcal C\subseteq \mathcal A$ iff the following are satisfied:
\begin{enumerate}
    \item For all $a\in\mathcal A$: $a\subseteq^* J$ or $a\cap J=^*\emptyset$;
    \item For all $b\in \mathcal B$, $b\subseteq^* J$;
    \item For all $c\in\mathcal C$, $c\cap J =^*\emptyset$.
\end{enumerate}

It is well known and easy to see that an almost disjoint family $\mathcal A$ is normal iff for all $\mathcal B\subseteq \mathcal A$, there exist a partitioner for $\mathcal B$ and $\mathcal A\setminus\mathcal B$. As an immediate consequence of this fact and the previous result, we obtain the following folklore result:

\begin{cor}
$X$ is a $Q$-set of $2^\omega$ if, and only if, $\Psi(\mathcal A_X)$ is normal.
\end{cor}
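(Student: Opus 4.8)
The plan is to assemble the corollary directly from the two facts made available in the excerpt, treating it as a characterization of normality obtained by stacking a combinatorial criterion on top of Proposition~\ref{prop: fsigma sse separa}. First I would invoke the stated folklore characterization of normality: $\mathcal A_X$ is normal if and only if for every subfamily $\mathcal B \subseteq \mathcal A_X$ there is a partitioner separating $\mathcal B$ from $\mathcal A_X \setminus \mathcal B$. Since every subfamily $\mathcal B$ of $\mathcal A_X$ has the form $\mathcal A_Y$ for a unique $Y \subseteq X$ (namely $Y = \langle \mathcal B\rangle_X$, via the bijection $x \mapsto a_x$), this reduces normality to the statement that for every $Y \subseteq X$, the subfamilies $\mathcal A_Y$ and $\mathcal A_{X \setminus Y}$ can be separated in $\mathcal A_X$.

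Next I would apply Proposition~\ref{prop: fsigma sse separa}, which says precisely that $\mathcal A_Y$ and $\mathcal A_{X\setminus Y}$ can be separated if and only if both $Y$ and $X \setminus Y$ are $F_\sigma$ in $X$. Combining these, $\mathcal A_X$ is normal if and only if for every $Y \subseteq X$, both $Y$ and its complement are $F_\sigma$ in $X$. I would then observe that $X \setminus Y$ is $F_\sigma$ in $X$ exactly when $Y$ is $G_\delta$ in $X$ (a complement of an $F_\sigma$ is a $G_\delta$, relatively to $X$), so the condition ``$Y$ is $F_\sigma$ and $X\setminus Y$ is $F_\sigma$'' is equivalent to ``$Y$ is both $F_\sigma$ and $G_\delta$ in $X$''. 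But as $Y$ ranges over all subsets of $X$, so does $X \setminus Y$, so requiring every $Y$ to be $G_\delta$ in $X$ already forces every subset to be $G_\delta$, and $F_\sigma$-ness of every subset follows automatically by taking complements. Thus the condition collapses to: every subset of $X$ is a relative $G_\delta$, which is exactly the definition of $X$ being a $Q$-set (together with $X$ uncountable).

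I would handle the uncountability clause separately, noting that for a $Q$-set the definition demands $X$ uncountable, and that $\mathcal A_X$ is an (infinite) almost disjoint family only when $X$ is suitably large; if $X$ is countable then $\Psi(\mathcal A_X)$ is metrizable (as remarked in the introduction, $\mathcal A$ is metrizable iff countable) and hence trivially normal, so strictly speaking one should state the equivalence for uncountable $X$, matching the convention in the definition of $Q$-set. I expect this bookkeeping about the uncountable case to be the only delicate point; the mathematical core is a clean chain of equivalences with no genuine obstacle, since all the combinatorial work has already been discharged in the normality characterization and in Proposition~\ref{prop: fsigma sse separa}.
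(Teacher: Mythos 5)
Your proposal is correct and follows exactly the route the paper intends: the corollary is stated there as an ``immediate consequence'' of the folklore partitioner characterization of normality together with Proposition~\ref{prop: fsigma sse separa}, which is precisely the chain of equivalences you assemble. Your extra remark about the countable case (where $\Psi(\mathcal A_X)$ is metrizable, hence normal, while $X$ fails the uncountability clause in the definition of $Q$-set) is a reasonable bit of bookkeeping that the paper leaves implicit.
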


Recall that an almost disjoint family $\mathcal A$ is strongly $\aleph_0$-separated iff each pair of countable disjoint subfamilies $\mathcal B,\mathcal C \subseteq \mathcal A$ can be separated by a partitioner. This definition was first presented in $\cite{paulsergio}$, it is weaker than almost-normality and under CH there exist strongly $\aleph_0$-separated MAD families.

\begin{cor}\label{cor:FsigmaGdelta}
Let $X\subset 2^\omega$. Suppose that, for every pair of countable disjoint subsets $Y,Z\subset X$ there exists a $F_\sigma$-$G_\delta$ set $F\subset 2^\omega$ satisfying:
\[
\mathcal A_Y\subset \mathcal A_{F\cap X} \qquad \text{ and } \qquad \mathcal A_Z\subset \mathcal A_{X\setminus F},
\]
then $\Psi(\A_X)$ is strongly $\aleph_0$-separated.
\end{cor}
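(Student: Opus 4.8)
The plan is to unwind the definition of strongly $\aleph_0$-separated and reduce the problem to a single application of Proposition~\ref{prop: fsigma sse separa}. Fix a pair of countable disjoint subfamilies $\mathcal B,\mathcal C\subseteq\mathcal A_X$; I must produce a partitioner for them. Since every subfamily of $\mathcal A_X$ is of the form $\mathcal A_W$ for a unique $W\subseteq X$, I set $Y=\langle\mathcal B\rangle_X$ and $Z=\langle\mathcal C\rangle_X$, so that $\mathcal B=\mathcal A_Y$, $\mathcal C=\mathcal A_Z$, and $Y,Z$ are countable disjoint subsets of $X$. The hypothesis then hands me an $F_\sigma$-$G_\delta$ set $F\subseteq 2^\omega$ with $\mathcal A_Y\subseteq\mathcal A_{F\cap X}$ and $\mathcal A_Z\subseteq\mathcal A_{X\setminus F}$; reading these inclusions back on the index sets, they say exactly $Y\subseteq F$ and $Z\cap F=\emptyset$.

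Next I would exploit that $F$ is simultaneously $F_\sigma$ and $G_\delta$. Intersecting the $F_\sigma$ set $F$ with $X$ shows $F\cap X$ is $F_\sigma$ in $X$; intersecting the $F_\sigma$ set $2^\omega\setminus F$ (the complement of the $G_\delta$ set $F$) with $X$ shows $X\setminus F$ is also $F_\sigma$ in $X$. Writing $Y'=F\cap X$, both $Y'$ and $X\setminus Y'=X\setminus F$ are therefore $F_\sigma$ in $X$, which is precisely condition~(2) of Proposition~\ref{prop: fsigma sse separa} applied to the pair $(X,Y')$. That proposition then yields a partitioner $J\subseteq 2^{<\omega}$ separating $\mathcal A_{Y'}$ from $\mathcal A_{X\setminus Y'}$: it decides every $a\in\mathcal A_X$ (each $a_x$ satisfies $a_x\subseteq^* J$ or $a_x\cap J=^*\emptyset$), with $a_x\subseteq^* J$ for $x\in F\cap X$ and $a_x\cap J=^*\emptyset$ for $x\in X\setminus F$.

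It then remains to check that this $J$ is already a partitioner for the \emph{given} $\mathcal B$ and $\mathcal C$. Condition~(1) in the definition of partitioner holds because it holds for the coarser partition above. Since $Y\subseteq F\cap X$, every $y\in Y$ has $a_y\subseteq^* J$, giving condition~(2); since $Z\subseteq X\setminus F$, every $z\in Z$ has $a_z\cap J=^*\emptyset$, giving condition~(3). Hence $J$ witnesses that $\mathcal B$ and $\mathcal C$ can be separated, and as $\mathcal B,\mathcal C$ were arbitrary, $\Psi(\mathcal A_X)$ is strongly $\aleph_0$-separated.

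I do not expect a genuine obstacle here: the real content of the corollary is the observation that ``$F$ is $F_\sigma$-$G_\delta$'' is exactly the right strengthening to force \emph{both} $F\cap X$ and $X\setminus F$ to be $F_\sigma$ in $X$, so that Proposition~\ref{prop: fsigma sse separa} produces a full two-sided partitioner rather than a one-sided approximation. The only point demanding a moment's care is that the hypothesis merely \emph{sandwiches} $Y$ and $Z$ between $F$ and its complement, so the partitioner obtained decides many more branches than those indexed by $Y\cup Z$; this is harmless, and in fact necessary, since the definition of partitioner requires $J$ to decide every element of $\mathcal A_X$.
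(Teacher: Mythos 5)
Your proof is correct and follows essentially the same route as the paper: represent $\mathcal B,\mathcal C$ as $\mathcal A_Y,\mathcal A_Z$, invoke the hypothesis to get $F$, observe that $F$ being $F_\sigma$-$G_\delta$ makes both $F\cap X$ and $X\setminus F$ relative $F_\sigma$'s, and apply Proposition~\ref{prop: fsigma sse separa} to obtain a partitioner that in particular separates $\mathcal B$ from $\mathcal C$. Your write-up is in fact more careful than the paper's (which compresses the last step into a single, somewhat garbled sentence), but there is no difference in substance.
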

\begin{proof}
Let $\mathcal B$ and $\mathcal C$ countable disjoint subfamilies of $\A_X$. Then there exist countable disjoint subsets $Y,Z\subset X$ such that $\mathcal B = \mathcal A_Y$ and $\mathcal C = \mathcal A_Z$. Let $F \subset X$ a $F_\sigma$-$G_\delta$ set that separates $\mathcal B$ and $\mathcal C$. By Proposition~\ref{prop: fsigma sse separa}, $\mathcal B$ and $\A_X\setminus \mathcal B$ are separated. Thus, $\A_X$ is strongly $\aleph_0$-separated.
\end{proof}

\begin{prop}\label{prop: gdelta reg closed}
Let $K\subset \Psi(\A_X)$. The following are equivalent:
\begin{enumerate}
    \item There exists $W\subset 2^{<\omega}$ such that $K = \cl(W)\cap\A_X$;
    \item $\langle K\rangle_X = \{x\in X: a_x\in K\}$ is $G_\delta$ in $X$.
\end{enumerate}
\end{prop}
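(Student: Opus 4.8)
The plan rests on a single observation about closures in $\Psi(\A_X)$. Since $2^{<\omega}$ is open and discrete while the points of $\A_X$ are limit points, for any $W\subseteq 2^{<\omega}$ and any $x\in X$ a basic neighborhood $\{a_x\}\cup(a_x\setminus F)$ meets $W$ exactly when $(a_x\setminus F)\cap W\neq\emptyset$; hence $a_x\in\cl(W)$ if and only if $a_x\cap W$ is infinite, which (as $n\mapsto x|_n$ is injective) is the same as $x|_n\in W$ for infinitely many $n$. Writing $D(W)=\{x\in 2^\omega:\exists^{\infty}n\ (x|_n\in W)\}$, this gives
\[
\langle \cl(W)\cap \A_X\rangle_X=D(W)\cap X,
\]
so the proposition reduces to the statement that the relatively $G_\delta$ subsets of $X$ are precisely the sets of the form $D(W)\cap X$.

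For $(1)\Rightarrow(2)$ I would simply expand the definition of $D(W)$:
\[
\langle \cl(W)\cap\A_X\rangle_X=\bigcap_{n\in\omega}\ \bigcup_{m\ge n}\{x\in X: x|_m\in W\}.
\]
Each set $\{x\in X:x|_m\in W\}$ is clopen in $X$, being the trace on $X$ of the union of the cylinders $[s]=\{y\in2^\omega: y\text{ extends }s\}$ over $s\in W\cap 2^m$, so the right-hand side is a countable intersection of open subsets of $X$, i.e.\ $G_\delta$ in $X$.

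For $(2)\Rightarrow(1)$ I would fix open sets $U_n\subseteq 2^\omega$, which I may take decreasing, with $\langle K\rangle_X=X\cap\bigcap_n U_n$, and realize $\bigcap_n U_n$ as $D(W)$. For each $n$ choose an antichain $A_n\subseteq 2^{<\omega}$ of strings, each of length at least $n$, with $U_n=\bigcup_{s\in A_n}[s]$: concretely, for $x\in U_n$ let $\ell(x)$ be least with $[x|_{\ell(x)}]\subseteq U_n$ and place $x|_{\max(\ell(x),n)}$ into $A_n$ (a short computation shows this set is an antichain). Put $W=\bigcup_n A_n$. If $x\in\bigcap_n U_n$, then for each $n$ exactly one $s_n\in A_n$ is an initial segment of $x$, and since $|s_n|\ge n$ these yield infinitely many distinct members of $W$, so $x\in D(W)$. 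Conversely, if $x\notin\bigcap_n U_n$, pick $k_0$ with $x\notin U_{k_0}$; as the $U_n$ decrease, no member of $A_k$ is an initial segment of $x$ for $k\ge k_0$, while for each $k<k_0$ the antichain property allows at most one such member. Thus $x$ meets $W$ in only finitely many initial segments and $x\notin D(W)$. Hence $D(W)=\bigcap_n U_n$, and by the opening observation $K=\cl(W)\cap\A_X$.

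The step I expect to be the crux is the construction of the $A_n$: the decreasing and antichain structure alone guarantees only finitely many hits off $\bigcap_n U_n$, but to force infinitely many hits on $\bigcap_n U_n$ one must insist that the lengths of the generating strings grow with $n$, which is the role of the $\max(\ell(x),n)$ truncation. Without this length growth a point of $\bigcap_n U_n$ could meet $W$ in a single initial segment---exactly what happens if some fixed short cylinder sits inside every $U_n$---and the equivalence would fail.
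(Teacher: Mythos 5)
Your proof is correct and follows essentially the same route as the paper's: the same characterization of $\cl(W)\cap\A_X$ as $\{a_x: |a_x\cap W|=\omega\}$ giving the $G_\delta$ representation in one direction, and in the other direction the same device of writing the decreasing open sets $U_n$ as unions of antichains of strings of length at least $n$ and taking their union as $W$. The only difference is cosmetic: you construct the antichains $A_n$ explicitly via the $\max(\ell(x),n)$ truncation, where the paper simply asserts that such antichains $L_n$ exist.
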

\begin{proof}

$(1)\implies (2)$: Let $W\subset 2^{<\omega}$ such that $K=\cl(W)\cap\A_X$. It follows that:
\[
\{x\in X: a_x\in K \} = \{x\in X: |a_x\cap W|=\omega\} = \bigcap_{n\in\omega} \bigcup_{m\geq m} 
\underbrace{\{ x\in X: x|_m\in W\}}_{\text{open set in $X$}}.
\]
Thus, it is a $G_\delta$-set of $X$.

$(2)\implies(1)$: Suppose $\langle K\rangle_X$ is a $G_\delta$ of $X$. Write $\langle K\rangle_X=\bigcap\limits_{n \in \omega} U_n$, where each $U_n$ is an open subset of $X$ and $U_n\subseteq U_m$ whenever $n\geq m$.

For each $n$, write $U_n=\bigcup\{[s]: s \in L_n\}$, where $L_n$ is a countable subset of $2^{<\omega}$ such that for all $s, t \in L_n$, $s, t$ are incompatible and $|s|, |t|>n$. 

Let $W=\bigcup\{L_n:  n \in \omega\}$. We claim that $\cl W\cap \mathcal A_X=K$.

Suppose $a \in \cl (W)\cap \mathcal A_X$. Let $x \in X$ be such that $a=a_x$. It suffices to see that $x \in U_n$ for every $n \in \omega$. Fix $n$. Since $a_x \in \cl (W)$, there exists infinitely many $m \in \omega$ such that $x|_m \in W=\bigcup_{k \in \omega} L_k$. Since all members of $L_k$ are pairwise incompatible, for each k, $x|_m \in L_k$ for at most one $m$. So there exists $m \in \omega$ and $k\geq n$ such that $x|_m \in L_k$, so $x \in U_k\subseteq U_n$.

On the other hand, if $a \in K$, let $x \in \langle K\rangle_X$ be such that $a=a_x$. Then $x \in U_n$ for all $n \in \omega$, that is, for each $n \in \omega$ there exists $s_n \in L_n$ such that $s_n \subseteq x$. Since $|s_n|>n$ for each $n$ and $s_n \in W$, this implies that $x \in \cl (W)$.
\end{proof}

In \cite[Theorem 3.6]{vinichim} equivalent conditions to almost-normality in $\Psi$-spaces were presented. In particular, the authors have shown that for an arbitrary almost disjoint family $\mathcal A$, the following holds:
\begin{equation}\label{eq: equivalences of almost normality}
\Psi(\mathcal A) \text{ is almost normal} \iff 
\begin{tabular}{l}
 For all regularly closed set $F$, there exists\\ a partitioner 
   for $F\cap\mathcal A$ and $\mathcal A\setminus F$
\end{tabular}
\end{equation}

\begin{defin}
Let $X$ be a Polish space. We say that a subset $A\subseteq X$ is a $\sigma$-set iff every relative $G_\delta$ subset of $A$ is a relative $F_\sigma$.
\end{defin}
\begin{defin}
An almost $Q$-set in $2^\omega$ is an uncountable subset $X\subseteq 2^\omega$ such that for every $W \subseteq 2^{<\omega}$, $[W]_X=\{x \in X: \forall m\in \omega\,  \exists n\geq m\,( x|_n \in W)\}$ (which is $\{x \in X: |a_x\cap W|=\omega\}$) is an $F_\sigma$ in $X$.
\end{defin}

Combining (\ref{eq: equivalences of almost normality}) and Proposition~\ref{prop: gdelta reg closed} yields an indirect proof that the almost $Q$-sets defined in \cite{vinichim} are, in fact, the well-known class of $\sigma$-sets:

\begin{teo}\label{teo: sigma if and only if almost-normal}
Let $X\subset 2^\omega$. The following are equivalent:
\begin{enumerate}
    \item $\A_X$ is almost-normal;
    \item $X$ is $\sigma$-set;
    \item $X$ is an almost $Q$-set.
\end{enumerate}
\end{teo}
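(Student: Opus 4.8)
The plan is to route all three conditions through the single combinatorial condition $(\ast)$: \emph{for every $W\subseteq 2^{<\omega}$ the set $[W]_X$ is $F_\sigma$ in $X$}. Condition $(\ast)$ is verbatim the definition of almost $Q$-set, so $(3)\iff(\ast)$ is immediate once we keep the standing hypothesis that $X$ is uncountable (I carry this throughout; for countable $X$ the three notions degenerate, and the statement is to be read with uncountability built into each special set). Thus the actual work is to establish $(2)\iff(\ast)$ and $(1)\iff(\ast)$, both by assembling the propositions already proved.

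For $(2)\iff(\ast)$ I use Proposition~\ref{prop: gdelta reg closed} to identify the relative $G_\delta$ subsets of $X$. Given a relative $G_\delta$ set $A\subseteq X$, apply the implication $(2)\Rightarrow(1)$ of that proposition to $K=\mathcal A_A$, noting $\langle\mathcal A_A\rangle_X=A$, to obtain $W$ with $\mathcal A_A=\cl(W)\cap\A_X$, whence $A=\langle\cl(W)\cap\A_X\rangle_X=[W]_X$; conversely each $[W]_X=\langle\cl(W)\cap\A_X\rangle_X$ is $G_\delta$ by $(1)\Rightarrow(2)$. So the relative $G_\delta$ subsets of $X$ are \emph{exactly} the sets $[W]_X$ for $W\subseteq 2^{<\omega}$. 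Therefore ``every relative $G_\delta$ subset of $X$ is a relative $F_\sigma$'', which is the definition of $\sigma$-set, says precisely that every $[W]_X$ is $F_\sigma$, i.e. $(\ast)$.

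For $(1)\iff(\ast)$ I first record that the regular closed subsets of $\Psi(\A_X)$ are exactly the sets $\cl(W)$ with $W\subseteq 2^{<\omega}$, and that $\cl(W)\cap\A_X=\mathcal A_{[W]_X}$ while $\A_X\setminus\cl(W)=\mathcal A_{X\setminus[W]_X}$. One direction is easy: each $t\in W$ is isolated, so $W\subseteq\inter\cl(W)$ and $\cl(W)$ is regular closed. For the converse, an arbitrary regular closed $F=\cl(U)$ satisfies $F=\cl(W)$ with $W:=U\cap 2^{<\omega}$, since density of $2^{<\omega}$ in $\Psi(\A_X)$ makes $W$ dense in $U$. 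Combining this with the characterization~(\ref{eq: equivalences of almost normality}), $\A_X$ is almost-normal iff for every $W$ there is a partitioner for $\mathcal A_{[W]_X}$ and $\mathcal A_{X\setminus[W]_X}$, which by Proposition~\ref{prop: fsigma sse separa} (with $Y=[W]_X$) holds iff both $[W]_X$ and $X\setminus[W]_X$ are $F_\sigma$ in $X$. But $X\setminus[W]_X$ is \emph{automatically} $F_\sigma$: from $X\setminus[W]_X=\{x\in X:|a_x\cap W|<\omega\}=\bigcup_{n}\bigcap_{m\geq n}\{x\in X:x|_m\notin W\}$ it is a countable union of closed sets. Hence the separation condition collapses to ``$[W]_X$ is $F_\sigma$ for every $W$'', i.e. $(\ast)$, giving $(1)\iff(\ast)$ and closing the cycle.

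I expect the only genuinely delicate point to be the identification of the regular closed sets — specifically that the trace on $\A_X$ of an \emph{arbitrary} regular closed set is $\mathcal A_{[W]_X}$ for a suitable $W$ — since this is the one step using both that $2^{<\omega}$ consists of isolated points and that it is dense in $\Psi(\A_X)$; everything else is bookkeeping built from Propositions~\ref{prop: gdelta reg closed} and~\ref{prop: fsigma sse separa} together with~(\ref{eq: equivalences of almost normality}).
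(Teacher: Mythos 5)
Your proof is correct, and it takes a genuinely more self-contained route than the paper's. The paper establishes the cycle by deferring the equivalence of (1) and (3) to \cite[Corollary 4.4]{vinichim}, proving (1)$\Rightarrow$(2) via Proposition~\ref{prop: gdelta reg closed}, (\ref{eq: equivalences of almost normality}) and Proposition~\ref{prop: fsigma sse separa}, and noting that (2)$\Rightarrow$(3) is immediate since every $[W]_X$ is a relative $G_\delta$. You instead route all three conditions through the hub $(\ast)$, and the payoff is that the equivalence of (1) and (3) is proved rather than imported: the key extra step is your identification of the regular closed subsets of $\Psi(\A_X)$ as exactly the sets $\cl(W)$ for $W\subseteq 2^{<\omega}$ (using that $2^{<\omega}$ is a dense set of isolated points), which together with (\ref{eq: equivalences of almost normality}) and Proposition~\ref{prop: fsigma sse separa} reduces almost-normality to ``every $[W]_X$ is $F_\sigma$''; the observation that $X\setminus[W]_X$ is automatically $F_\sigma$, being the complement of a $G_\delta$, is exactly what collapses the two-sided condition of Proposition~\ref{prop: fsigma sse separa} to the one-sided $(\ast)$. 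Your version also makes explicit that the relative $G_\delta$ subsets of $X$ are precisely the traces $[W]_X$, which is the content of Proposition~\ref{prop: gdelta reg closed} but is used only in one direction in the paper. The one caveat, which you already flag, is the uncountability bookkeeping: the paper's definition of $\sigma$-set does not require uncountability while that of almost $Q$-set does, so for countable $X$ conditions (1) and (2) hold while (3) fails; reading uncountability into all three, as you do, is the intended interpretation.
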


\begin{proof}
The equivalence between $(1)$ and $(3)$ is established in \cite[Corollary 4.4]{vinichim}.

$(1)\Longrightarrow(2)$ Let $Y\subset X$ a relative $G_\delta$ set and consider $K=\mathcal A_Y$. Notice that $\langle K\rangle_X=Y$ is $G_\delta$ set, thus by Proposition~\ref{prop: gdelta reg closed} there exists $W\subset 2^{<\omega}$ such that $\mathcal A_Y=\cl(W)\cap \A_X$. By (\ref{eq: equivalences of almost normality}) there exists a partitioner $Z\subset 2^\omega$ for $\mathcal A_Y$ and $\mathcal A_{X\setminus Y}$. Then, by Proposition~\ref{prop: fsigma sse separa}, $Y$ is also a relative $F_\sigma$ set.

$(2)\Longrightarrow(3)$ It is clear that for every $W\subseteq 2^{<\omega}$, $[W]_X$ is a $G_\delta$.
\end{proof}

Recall that a Luzin family (Luzin$^*$ family) is an almost-disjoint family $\mathcal A$ of size $\omega_1$ for which there exists an injective enumeration $\mathcal A=\{a_\alpha: \alpha<\omega_1\}$ such that $\forall \alpha<\omega_1 \forall n \in \omega\, \{\beta<\alpha: a_\beta\cap a_\alpha \subseteq n\}$ is finite (such that $\forall \alpha<\omega_1 \forall n \in \omega\, \{\beta<\alpha: |a_\beta\cap a_\alpha| < n\}$ is finite). Clearly, every Luzin$^*$ family is a Luzin family. Luzin families fail to be normal badly: for every pair of disjoint uncountable sets $\mathcal B$, $\mathcal C$ of $\mathcal A$ there is no $X\subseteq \omega$ such that for every $a \in \mathcal B$, $a\subseteq^*X$ and $\forall a \in \mathcal C$, $a\cap X=^*\emptyset$.

In \cite{nluzin}, Hru\v s\'ak and Guzmán introduced the notion of an almost disjoint family potentially having a property $P$. Given a property $P$ of almost disjoint families and an almost disjoint family $\mathcal A$, they defined \textit{$\mathcal A$ is potentially $P$} if there exists a ccc forcing notion $\mathbb P$ such that $\mathbbm 1\Vdash_{\mathbb P} \check{\mathcal A} \text{ is } P$. They showed that $\mathcal A$ is potentially normal iff $\mathcal A$ has no $n$-Luzin gap (see their paper for the definition).


We can ask if there is a nice characterization for potentially almost-normal almost disjoint families. We don't have a answer for this question. However, we have the following:

\begin{prop}[ZFC]\label{weaklyseparatedluzin}
There exists a Luzin$^*$ family which is not potentially almost-normal.
\end{prop}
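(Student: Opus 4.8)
The plan is to construct a Luzin$^*$ family explicitly and then show that almost-normality is preserved by no ccc forcing.

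First I would set up the machinery for proving a family is \emph{not} potentially almost-normal. By the characterization in \eqref{eq: equivalences of almost normality}, $\mathcal A$ is almost-normal exactly when for every regularly closed $F$ there is a partitioner for $F\cap\mathcal A$ and $\mathcal A\setminus F$. So to show non-potential-almost-normality I would exhibit, in the ground model, a regularly closed witness—equivalently a set $W\subseteq N$ and the family $\mathcal B = \cl(W)\cap\mathcal A$ it generates—such that $\mathcal B$ and $\mathcal A\setminus\mathcal B$ cannot be separated by a partitioner in any ccc extension. The natural strategy is to split a Luzin$^*$ family into two uncountable halves $\mathcal B$ and $\mathcal C=\mathcal A\setminus\mathcal B$ arranged so that \emph{both} halves are regularly closed (each equals $\cl(W)\cap\mathcal A$ for a suitable $W$), and then invoke the rigidity of the Luzin gap condition: a Luzin gap, being a gap of type $(\omega_1,\omega_1)$ witnessed by the separation-failure property recalled just before the proposition, stays unseparated under ccc forcing because ccc forcing adds no uncountable branch through the tree of finite approximations to a separator, and cannot freeze the countably many mutual intersections required to produce a partitioner.

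The key steps, in order, are: (i) fix a Luzin$^*$ enumeration $\mathcal A=\{a_\alpha:\alpha<\omega_1\}$ and partition the index set into two uncountable, co-uncountable pieces $S, \omega_1\setminus S$ so that both $\mathcal B=\{a_\alpha:\alpha\in S\}$ and $\mathcal C=\{a_\alpha:\alpha\notin S\}$ interleave cofinally (this guarantees the Luzin gap between them is genuinely of type $(\omega_1,\omega_1)$ and not trivially separated); (ii) check that each of $\mathcal B,\mathcal C$ arises as a regularly closed trace, i.e. produce $W$ with $\mathcal B=\cl(W)\cap\mathcal A$, which is where Proposition~\ref{prop: gdelta reg closed} would be used if we phrase the family as a branch family, or a direct construction of $W$ otherwise; (iii) argue that the existence of a partitioner $J$ for $\mathcal B$ and $\mathcal C$ contradicts the Luzin gap property, and that this contradiction is absolute for ccc extensions: any ccc $\mathbb P$ forcing a partitioner would, by a standard $\Delta$-system/pressing-down argument on names for the finitely many $n$ such that $a_\alpha\cap a_\beta\subseteq n$, collapse the Luzin structure, which is impossible since ccc forcing preserves $\omega_1$ and preserves the statement ``$\{\beta<\alpha:|a_\beta\cap a_\alpha|<n\}$ is finite.''

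The main obstacle I expect is step (iii): showing the non-separation is \emph{robust} under ccc forcing rather than merely holding in the ground model. The delicate point is that a partitioner for an almost-normality witness is a \emph{single} set $J$ almost-containing all of $\mathcal B$ and almost-missing all of $\mathcal C$, and one must rule out such a $J$ appearing in the extension. The standard tool is that the pair $(\mathcal B,\mathcal C)$ forms a Luzin gap, and Luzin gaps are \emph{indestructible} by ccc forcing (this is exactly the kind of rigidity underlying the Hru\v s\'ak--Guzm\'an analysis of potential normality via $n$-Luzin gaps cited above). I would isolate a clean lemma: \emph{if $(\mathcal B,\mathcal C)$ is a Luzin gap then no ccc forcing adds a partitioner for $\mathcal B$ and $\mathcal C$}, prove it by a counting/pressing-down argument on nice names, and then the proposition follows by feeding in the two-sided regularly-closed splitting from steps (i)--(ii). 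The genuinely new content over the normal case is arranging that the obstructing subfamily is \emph{regularly closed}, since almost-normality only requires separating regularly closed sets; everything else reduces to known Luzin-gap indestructibility.
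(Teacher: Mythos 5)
Your overall shape is right---exhibit a ground-model regular closed set whose trace on $\mathcal A$ is uncountable and co-uncountable, and let the Luzin$^*$ property (which survives ccc forcing because $\omega_1$ is preserved and the defining condition is about ground-model objects) kill any partitioner in the extension. But there is a genuine gap at your step (ii), and it is exactly where the paper's proof does all of its work. You cannot start from an arbitrary Luzin$^*$ family and then ``check'' that some uncountable, co-uncountable subfamily is a regularly closed trace: the paper points out (citing Example 2.10 of \cite{paulsergio}) that under CH there is a Luzin MAD family for which \emph{every} $X\subseteq\omega$ has trace $\{a\in\mathcal A:|a\cap X|=\omega\}$ either finite or co-countable, so no such witness exists for that family. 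Consequently the family and the witness must be built together: the paper runs a recursion of length $\omega_1$ that fixes a set $X\subseteq\omega$ in advance and arranges, while securing the Luzin$^*$ condition, that $X$ splits each even-indexed $a_\alpha$ and is disjoint from each odd-indexed one, so that $\cl(X)=X\cup\{a_\alpha:\alpha\text{ even}\}$ is the desired regular closed set. Your proposal contains no mechanism for producing $W$; and your fallback of invoking Proposition~\ref{prop: gdelta reg closed} is a dead end, since that proposition concerns branch families $\mathcal A_Y$ for $Y\subseteq 2^\omega$, and branch families are never Luzin (any basic clopen $[s]$ splitting $Y$ into two uncountable pieces yields a separation by Proposition~\ref{prop: fsigma sse separa}).

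Two smaller points. First, you do not need both halves to be regularly closed: almost-normality separates a regular closed set from a disjoint closed set, and every subfamily of $\mathcal A$ is closed in $\Psi(\mathcal A)$, so only one side needs to be of the form $\cl(X)$ with $X\subseteq\omega$. Second, your step (iii) is much easier than you anticipate: no lemma about names or pressing-down on nice names is needed. Since ccc forcing preserves $\omega_1$ and the statement ``$\{\beta<\alpha:|a_\beta\cap a_\alpha|<n\}$ is finite'' is absolute for ground-model sets, $\mathcal A$ is still Luzin$^*$ in $V[G]$, and the standard ZFC fact that a Luzin family admits no partitioner between two uncountable disjoint subfamilies can simply be applied inside $V[G]$. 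The indestructibility you were worried about is automatic; the construction of the witness is the part that is not.
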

\begin{proof}
Let $\{A_n: n \in \omega\}$ be a partition of $\omega$ into infinite sets. For each $n$, let $X_n$ be an infinite subset of $A_{2n}$ such that $A_{2n}\setminus A_n$ is infinite. Let $X=\bigcup_{n \in \omega}X_n$. For each infinite countable ordinal $\alpha$, let $\phi_\alpha:\omega\rightarrow \alpha$ be a bijection.

We will inductively define $(a_\alpha: \alpha<\omega)$ such that for all $\alpha<\omega_1$:

\begin{enumerate}[label=(\roman*)]
    \item $a_\alpha \in [\omega]^\omega$ and $a_\alpha\cap a_\beta$ is finite for every $\beta<\alpha$,
    \item $\forall n \in \omega \,\{\beta<\alpha: |a_\beta\cap a_\alpha|<n\}$ is finite,
    \item if $\alpha$ is odd, then $a_\alpha\cap X=\emptyset$, and
    \item if $\alpha$ is even, then $X$ splits $a_\alpha$, that is, both $a_\alpha\setminus X$ and $a_\alpha\cap X$ are infinite.
\end{enumerate}

The items (i) and (ii) guarantees that $\mathcal A=\{a_\alpha: \alpha<\omega_1\}$ is a Luzin$^*$ family, (iii) and (iv) guarantees that $X$ is such that $\{\alpha<\omega_1: |a_\alpha\cap X|=\omega\}$ is the set of even countable ordinals.

Notice that (i)-(iv) hold for $\alpha \in \omega$. Having constructed $a_\beta$ for $\beta<\alpha$ for some infinite $\alpha<\omega_1$, we construct $a_\alpha$ as follows: for each $n$, let $s_n\subseteq a_{\phi_\alpha(n)}\setminus \bigcup_{i<n}a_{\phi_\alpha(i)}$ such that $|s_n|=n$. If $\alpha$ is odd, we choose $s_n$ such that $s_n\cap X=\emptyset$, which is possible by (iii) and (iv). If $\alpha$ is even and $\phi_\alpha(n)$ is even, we choose $s_n\subseteq X$, which is possible by (iv), and if $\phi_\alpha(n)$ is odd, we choose $s_n$ such that $X\cap s_n=\emptyset$, which is possible by (iii). It is clear that by letting $a_\alpha=\bigcup\{s_n: n \in \omega\}$, (i)-(iv) are satisfied.

We claim that $\mathcal A$ is not potentially almost-normal: if $V[G]$ is a ccc forcing extension of $V$, $\mathcal A$ is still a Luzin$^*$ family in $V[G]$ (since c.c.c. forcings preserve cardinals) and $\{a_\alpha: \alpha<\omega \text{ is even}\}\cup X$ is a regular closed subset of $\Psi(\mathcal A)$ that cannot be separated from the closed set $\{a_\alpha: \alpha<\omega \text{ is odd}\}$ since that would imply the existence of a partitioner for the uncountable sets $\{a_\alpha: \alpha<\omega \text{ is even}\}\cup X$ and $\{a_\alpha: \alpha<\omega \text{ is odd}\}$, violating the fact that $\mathcal A$ is Luzin$^*$.
\end{proof}

Such a set $X$ does not exist for every Luzin family. For instance, in Example 2.10 \cite{paulsergio}, CH is used to construct a MAD Luzin family $\mathcal A$ for which for every $X\subseteq \omega$, $\{a \in \mathcal A: |a\cap X|=\omega\}$ is either finite or co-countable. It is not clear for us if that Luzin family is potentially almost-normal.

\begin{question}
Is it consistent that there is an almost-normal Luzin-family? What about a potentially almost-normal one?
\end{question}

\begin{question}
What is a nice characterization of potentially almost-normal almost disjoint families?
\end{question}

We note that for any Luzin family $\mathcal A$ and for any uncountable set $\mathcal B\subseteq \mathcal A$ whose complement is also uncountable, we can add, by a c.c.c. forcing, a set $X$ such that $\mathcal B=\{a \in A: |a\cap X|=\omega\}$, thus:

\begin{prop}
Every Luzin family is potentially not almost-normal.
\end{prop}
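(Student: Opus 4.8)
The plan is to exploit the remark immediately preceding the statement: for a Luzin family one can force, with a c.c.c.\ (indeed $\sigma$-centered) poset, a set $X\subseteq\omega$ whose infinite-intersection trace on $\mathcal A$ is a prescribed uncountable set with uncountable complement, and then read the failure of almost-normality off the characterization (\ref{eq: equivalences of almost normality}). So I would fix a Luzin family $\mathcal A=\{a_\alpha:\alpha<\omega_1\}$ and split it as $\mathcal A=\mathcal B\cupdot\mathcal C$ with both pieces uncountable, say $\mathcal B=\{a_\alpha:\alpha\text{ even}\}$ and $\mathcal C=\{a_\alpha:\alpha\text{ odd}\}$. The target is a c.c.c.\ forcing $\mathbb P$ forcing $\{a\in\mathcal A:|a\cap \dot X|=\omega\}=\mathcal B$; once $X\subseteq\omega$ is such a generic set, $X$ is open in the discrete part of $\Psi(\mathcal A)$, so $\cl(X)$ is automatically regular closed, and $\cl(X)=X\cup\{a\in\mathcal A:|a\cap X|=\omega\}=X\cup\mathcal B$, whence $\cl(X)\cap\mathcal A=\mathcal B$ and $\mathcal A\setminus\cl(X)=\mathcal C$.

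For the forcing itself I would use the natural almost-disjoint poset. Let $\mathbb P$ consist of pairs $(s,H)$ with $s\in[\omega]^{<\omega}$ and $H\in[\mathcal C]^{<\omega}$, ordered by declaring $(s',H')\le(s,H)$ iff $s\subseteq s'$, $H\subseteq H'$, and $(s'\setminus s)\cap\bigcup_{c\in H}c=\emptyset$, and set $\dot X=\bigcup\{s:(s,H)\in \dot G\}$. Two density facts give the desired trace. First, for each $c\in\mathcal C$ the set of conditions with $c\in H$ is dense, and once $c\in H$ no further element of $c$ can enter $X$, so $|X\cap c|<\omega$. Second, for each $b\in\mathcal B$ and each $n$, the set of conditions with $|s\cap b|\ge n$ is dense: given $(s,H)$, each $c\in H$ meets $b$ finitely because $\mathcal A$ is almost disjoint, so $b\cap\bigcup_{c\in H}c$ is finite and one may adjoin to $s$ a point of $b\setminus(s\cup\bigcup_{c\in H}c)$; hence $|X\cap b|=\omega$. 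Finally $\mathbb P$ is $\sigma$-centered, since any two conditions with the same first coordinate $s$ have the common extension $(s,H_1\cup H_2)$, and there are only countably many choices of $s$; in particular $\mathbb P$ is c.c.c.

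It remains to pass to the extension $V[G]$ and conclude. Because a c.c.c.\ forcing preserves $\omega_1$ and leaves every $a_\alpha\cap a_\beta$ unchanged, the defining property of a Luzin family is preserved, so $\mathcal A$ is still Luzin and $\mathcal B,\mathcal C$ are still uncountable in $V[G]$. If $\mathcal A$ were almost-normal in $V[G]$, then by (\ref{eq: equivalences of almost normality}) applied to the regular closed set $F=\cl(X)$ there would be a partitioner for $F\cap\mathcal A=\mathcal B$ and $\mathcal A\setminus F=\mathcal C$; but a partitioner for two uncountable subfamilies contradicts the Luzin property (this is exactly the ``badly non-normal'' behaviour of Luzin families recalled earlier). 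Hence $\mathbbm 1\Vdash_{\mathbb P}$ ``$\mathcal A$ is not almost-normal'', which is the assertion that $\mathcal A$ is potentially not almost-normal. I expect the only genuine verification to be the correctness and $\sigma$-centeredness of $\mathbb P$ in the second paragraph — in particular the interplay, via almost-disjointness, between freezing the $\mathcal C$-traces finite and keeping the $\mathcal B$-traces infinite; the preservation of the Luzin property and the regular-closedness of $\cl(X)$ are routine.
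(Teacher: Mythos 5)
Your proposal is correct and follows essentially the same route as the paper: the paper's proof consists precisely of splitting $\mathcal A$ into two uncountable pieces and applying Solovay's almost-disjoint coding poset $[\omega]^{<\omega}\times[\mathcal A\setminus\mathcal B]^{<\omega}$ with the same ordering you describe, then reading off the failure of almost-normality from the impossibility of a partitioner for two uncountable subfamilies of a Luzin family. You have simply supplied the density, $\sigma$-centeredness, and preservation details that the paper leaves implicit.
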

\begin{proof}
Let $\mathcal A$ be a Luzin family, let $\mathcal B\subseteq \mathcal A$ be a an uncountable set whose complement in $\mathcal A$ is also uncountable. Consider Solovay's poset for adding a set $X$ almost disjoint with $\mathcal A\setminus \mathcal B$, i.e., $\mathbb P=[\omega]^{<\omega}\times [\mathcal A\setminus \mathcal B]^{<\omega}$ ordered by $(s, A)\leq (s', A')$ ($\leq$ means stronger) iff $s\supseteq s'$, $A\supseteq A'$ and $\forall n \in s\setminus s'\, (n \notin \bigcup A')$.
\end{proof}

Notice that $\mathcal A$ may potentially have a property $P$ and potentially have property $\neg P$.

\section{Pseudonormality and \texorpdfstring{$\lambda$}{lambda}-sets}
In \cite{vinichim} the authors extended the definition of strongly $\aleph_0$-separated almost disjoint family, introduced in \cite{paulsergio}, in the following way: an almost disjoint family $\mathcal A$ is said to be strongly $(\aleph_0,<\!\mathfrak c)$-separated if and only if every pair of disjoint subfamilies of $\mathcal A$ can be separated, provided one is countable and the other has size less than $\mathfrak c$. This stronger property is useful to distinguish, at least consistently, almost-normal and strongly $\aleph_0$-separated almost disjoint families because, under the assumption $\mathfrak p>\omega_1$, a Luzin family is strongly $(\aleph_0,<\!\mathfrak c)$-separated and it is not almost normal. This separation property can be further extended in the following natural way:

\begin{defin}
We say that an almost disjoint family $\A$ is strongly $(\aleph_0,\mathfrak c)$-separated if and only if for every countable subfamily $\mathcal B\subset \A$, $\mathcal B$ and $\A\setminus \mathcal B$ can be separated.
\end{defin}

It turns out that this property is the combinatorical equivalent of the well known topological property, pseudonormality. Recall that a topological space $\mathcal X$ is pseudonormal if and only if for every pair of disjoint closed sets, with at least one of them is countable,  can be separated by disjoint open sets.

\begin{prop}
Let $\mathcal A$ be an almost disjoint family. The following are equivalent:
\begin{enumerate}
    \item $\A$ is strongly $(\aleph_0,\mathfrak c)$-separated.
    \item $\Psi(\A)$ is pseudonormal.
\end{enumerate}
\end{prop}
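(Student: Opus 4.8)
The plan is to pass between the topological statement (pseudonormality) and the combinatorial one (existence of partitioners) by exploiting the structure of $\Psi(\mathcal{A})$: the set $N$ is open and discrete, while $\mathcal{A}$ is a \emph{closed discrete} subspace, so that every subfamily of $\mathcal{A}$ is automatically closed. The two implications are symmetric in spirit, since a partitioner $J\subseteq N$ for $\mathcal{B}$ and $\mathcal{A}\setminus\mathcal{B}$ encodes exactly a pair of disjoint open sets whose traces on $N$ are $J$ and $N\setminus J$.

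For $(2)\Rightarrow(1)$, I would start from a countable subfamily $\mathcal{B}\subseteq\mathcal{A}$. Since $\mathcal{A}$ is closed and discrete in $\Psi(\mathcal{A})$, both $\mathcal{B}$ and $\mathcal{A}\setminus\mathcal{B}$ are closed, disjoint, and $\mathcal{B}$ is countable; pseudonormality then yields disjoint open sets $U\supseteq\mathcal{B}$ and $V\supseteq\mathcal{A}\setminus\mathcal{B}$. I would set $J=U\cap N$ and verify the three clauses of the partitioner definition: for $b\in\mathcal{B}$ a basic neighborhood $\{b\}\cup(b\setminus F_b)\subseteq U$ forces $b\subseteq^* J$; for $a\in\mathcal{A}\setminus\mathcal{B}$ the analogous neighborhood inside $V$, together with $U\cap V=\emptyset$, forces $a\cap J=^*\emptyset$; and the dichotomy clause $(1)$ is then automatic since every $a\in\mathcal{A}$ lies in exactly one of the two families. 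This direction is routine.

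For $(1)\Rightarrow(2)$, take disjoint closed sets $F$ (countable) and $K$, and split each along $N$ and $\mathcal{A}$, writing $F=F_N\cup F_{\mathcal{A}}$ and $K=K_N\cup K_{\mathcal{A}}$. Applying strongly $(\aleph_0,\mathfrak{c})$-separation to the countable family $\mathcal{B}=F_{\mathcal{A}}$ gives a partitioner $J$ for $F_{\mathcal{A}}$ and $\mathcal{A}\setminus F_{\mathcal{A}}\supseteq K_{\mathcal{A}}$, so that suitable basic neighborhoods of the points of $F_{\mathcal{A}}$ sit inside $J$ while those of $K_{\mathcal{A}}$ sit inside $N\setminus J$. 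I would then assemble $U$ from $F_N$ together with these neighborhoods of $F_{\mathcal{A}}$, and $V$ from $K_N$ together with the neighborhoods of $K_{\mathcal{A}}$, and check that $F\subseteq U$, $K\subseteq V$.

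The main obstacle I anticipate is precisely the interaction between the isolated $N$-points of one closed set and the $\mathcal{A}$-neighborhoods of the other: a priori a neighborhood of some $a\in K_{\mathcal{A}}$ could meet the (possibly infinite) set $F_N$, spoiling disjointness. The observation that resolves this is that for $a\in K_{\mathcal{A}}$ the intersection $a\cap F_N$ must be finite --- otherwise $a\in\cl(F_N)\subseteq\cl(F)=F$, contradicting $F\cap K=\emptyset$ --- and symmetrically $b\cap K_N$ is finite for every $b\in F_{\mathcal{A}}$. This lets me enlarge each finite cut-off set to also delete $a\cap F_N$ (respectively $b\cap K_N$), so that the neighborhoods of $K_{\mathcal{A}}$ avoid $F_N$ and those of $F_{\mathcal{A}}$ avoid $K_N$. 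With this adjustment the four possible overlaps --- the $\mathcal{A}$-parts, $F_N$ versus $K_N$, and the two cross terms --- are all empty, since the remaining $N$-traces lie in $J$ and $N\setminus J$ respectively, giving the desired disjoint open separation and hence pseudonormality.
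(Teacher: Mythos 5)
Your proof is correct and follows essentially the same route as the paper: in one direction the partitioner is read off as the trace $U\cap N$ of the separating open set, and in the other the partitioner is converted into a separation, with the isolated points $F\cap N$ and $K\cap N$ handled by finite adjustments (the paper packages this as a single clopen set $C'=H\cup(F\cap\omega)\setminus(G\cap\omega)$, whereas you trim basic neighborhoods, but the finiteness observations justifying either adjustment are identical). No gaps.
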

\begin{proof}

$(1)\implies(2)$: Let $F, G$ disjoint closed subsets of $\Psi(\A)$, where $F$ is countable. Then, there exists a partitioner $Z\subset \omega$ for $\A\cap F$ and $\A\setminus F$:
\[
F\cap \A = \{a \in \mathcal A: a\subset^* Z \} \qquad \text{ and } \qquad \A\setminus F = \{a\in \mathcal A: a\cap Z =^*\emptyset \}.
\]

Let $H = \{a\in \A: a\subset^* Z \}\cup Z$. $H$ is clopen. It follows that $C = H \cup \Big( (F\cap \omega)\setminus  H \Big) =  H \cup (F\cap \omega)$ is a clopen set that separates $F\cap \A$ and $\A\setminus F$.

It is straightforward to check that $C' = C\setminus (G\cap \omega)$ is a clopen set in $\Psi(\A)$ that separates $F$ and $G$. Hence, $\Psi(\A)$ is pseudonormal.

$(2)\implies (1)$: Given a countable subset $\mathcal B$ of $\mathcal A$, it suffices to show a partitioner for $\mathcal B$ and $\mathcal A\setminus \mathcal B$. Let $C$ be a clopen set separating these two closed sets. It is straightforward to show that $P=C\cap \omega$ works.
\end{proof}

We will stick with the term pseudonormality. The following folklore proposition is already known, as noted in \cite{OnQsets}. The proof can be obtained as a corollary of Proposition 2.2. and we write it for completeness.

\begin{prop}\label{prop: lambda iff (w,c)-sep}
Let $X\subset 2^\omega$. The following are equivalent:
\begin{enumerate}
    \item $X$ is a $\lambda$-set;
    \item $\Psi(\A_X)$ is pseudonormal.
\end{enumerate}
\end{prop}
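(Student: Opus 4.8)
The plan is to reduce everything to the immediately preceding proposition, which already tells us that $\Psi(\mathcal A_X)$ is pseudonormal if and only if $\mathcal A_X$ is strongly $(\aleph_0,\mathfrak c)$-separated. Thus it suffices to prove that $X$ is a $\lambda$-set if and only if $\mathcal A_X$ is strongly $(\aleph_0,\mathfrak c)$-separated, and for this I would simply translate the combinatorial separation property into a statement about relative Borel complexity using Proposition~\ref{prop: fsigma sse separa}.

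First I would unpack the definition of strongly $(\aleph_0,\mathfrak c)$-separated. Every countable subfamily $\mathcal B\subseteq\mathcal A_X$ is of the form $\mathcal A_Y$ for a unique countable $Y\subseteq X$, and then $\mathcal A_X\setminus\mathcal B=\mathcal A_{X\setminus Y}$. So $\mathcal A_X$ is strongly $(\aleph_0,\mathfrak c)$-separated exactly when, for every countable $Y\subseteq X$, the families $\mathcal A_Y$ and $\mathcal A_{X\setminus Y}$ can be separated in $\mathcal A_X$. By Proposition~\ref{prop: fsigma sse separa}, this separation holds if and only if both $Y$ and $X\setminus Y$ are $F_\sigma$ in $X$.

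The observation that finishes the argument is that a countable subset $Y\subseteq X$ is automatically $F_\sigma$ in $X$: since $X$ is metrizable, each singleton is closed in $X$, so $Y$, being a countable union of singletons, is $F_\sigma$. Hence the condition that both $Y$ and $X\setminus Y$ be $F_\sigma$ in $X$ collapses to the single requirement that $X\setminus Y$ be $F_\sigma$ in $X$, which is the same as saying that $Y$ is $G_\delta$ in $X$. Quantifying over all countable $Y\subseteq X$, this is precisely the statement that $X$ is a $\lambda$-set (for uncountable $X$), and the chain of equivalences closes.

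I do not expect a genuine obstacle here, since all the heavy lifting is done by Proposition~\ref{prop: fsigma sse separa} together with the preceding proposition; the only point requiring care is the apparent asymmetry between the two halves of the separation condition, which is resolved by noting that the countable side is trivially $F_\sigma$. A secondary bookkeeping matter is the uncountability clause in the definition of $\lambda$-set: if $X$ is countable then $\Psi(\mathcal A_X)$ is metrizable and hence pseudonormal, so one should either restrict to uncountable $X$ or read the equivalence under that standing convention.
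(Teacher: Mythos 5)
Your proof is correct and follows essentially the same route as the paper's: both directions reduce to Proposition~\ref{prop: fsigma sse separa} via the equivalence of pseudonormality with strong $(\aleph_0,\mathfrak c)$-separation, with the key observation that the countable side is automatically $F_\sigma$ so the condition collapses to $Y$ being relatively $G_\delta$. Your explicit remarks on this asymmetry and on the uncountability clause in the definition of $\lambda$-set are minor points the paper leaves implicit, but the substance is identical.
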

\begin{proof}

$(1)\implies (2)$: By the previous proposition, it is enough to prove that $\mathcal A$ is strongly $(\aleph_0, \mathfrak c)$-separated. If $\mathcal B\subset \A_X$ is a countable subfamily, there exists a countable set $Y\subset X$ such that $\mathcal B = \mathcal A_Y$ and $\A_X\setminus \mathcal B = \mathcal A_{X\setminus Y}$. Since $Z$ and $X\setminus Z$ are $F_\sigma$ sets, by Proposition~\ref{prop: fsigma sse separa}, $\mathcal B$ and $\A_X\setminus \mathcal B$ are separated.

$(2)\implies (1)$: Let $Y\subset X$ a countable set. Then $\mathcal \A_Y$ and $\mathcal A_{X\setminus Y}$ are separated by disjoint open subsets $U\supseteq \mathcal \A_Y$  and $V\supseteq A_{X\setminus Y}$. One can check that $U$ and $V$ are clopen sets. Then, by Proposition~\ref{prop: fsigma sse separa}, $Y$ is a $G_\delta$ set in $X$.
\end{proof}

The reader may notice that follows from the previous proof that in $\Psi(\A_X)$, for a pair of disjoint closed subsets $F,G$, provided at least one of them is countable:
\begin{equation}
F \text{ and } G \text{ are separated by open sets} \iff
F \text{ and } G \text{ are separated by clopen sets}.
\end{equation}

It is worth noting that by \cite[Proposition 3.3]{paulsergio}, we do have MAD strongly $\aleph_0$-separated families. However, the same does not happen for pseudonormal families since, as it is known, in a MAD family we cannot separate an infinite countable set from its complement.

\begin{prop}
If $\A$ is a pseudonormal almost disjoint family, then $\A$ is not MAD.
\end{prop}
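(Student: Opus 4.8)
The plan is to argue by contradiction, exploiting the combinatorial reformulation of pseudonormality established in the previous proposition. Suppose $\A$ were simultaneously MAD and pseudonormal. Since every almost disjoint family is infinite, I would fix a countably infinite subfamily $\mathcal B=\{a_n:n\in\omega\}\subseteq\A$. By the previous proposition $\A$ is strongly $(\aleph_0,\mathfrak c)$-separated, so there is a partitioner $Z\subseteq\omega$ for $\mathcal B$ and $\A\setminus\mathcal B$: that is, $a_n\subseteq^* Z$ for every $n$, while $c\cap Z=^*\emptyset$ for every $c\in\A\setminus\mathcal B$. The goal is to produce a single infinite set $D\subseteq Z$ that is almost disjoint from every member of $\A$, contradicting maximality.

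The first step is to check that $Z$ is not almost covered by any finite union of the $a_n$. Indeed, if $Z\subseteq^* a_0\cup\cdots\cup a_n$ held for some $n$, then from $a_{n+1}\subseteq^* Z$ we would get that $a_{n+1}$ is almost contained in $a_0\cup\cdots\cup a_n$; but $a_{n+1}$ meets each $a_i$ (for $i\le n$) in a finite set, forcing $a_{n+1}$ to be finite, a contradiction. Hence $Z\setminus(a_0\cup\cdots\cup a_n)$ is infinite for every $n$.

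With this in hand I would build $D$ by a straightforward diagonalization: recursively choose distinct points $d_n\in Z\setminus(a_0\cup\cdots\cup a_n)$, which is possible by the previous paragraph, and set $D=\{d_n:n\in\omega\}$. Then $D\subseteq Z$ is infinite. For each fixed $n$ we have $d_m\notin a_n$ whenever $m\ge n$, so $D\cap a_n\subseteq\{d_0,\dots,d_{n-1}\}$ is finite; thus $D$ is almost disjoint from every element of $\mathcal B$. On the other hand, for $c\in\A\setminus\mathcal B$ we have $c\cap D\subseteq c\cap Z$, which is finite, so $D$ is almost disjoint from every element of $\A\setminus\mathcal B$ as well. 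Since $D$ is infinite but $D\cap a$ is finite for every $a\in\A$, the set $D$ cannot itself belong to $\A$, so $\A\cup\{D\}$ properly extends $\A$, contradicting MADness.

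The argument is essentially routine once the reformulation of pseudonormality is available; the only point requiring genuine care is the verification that $Z$ is not almost covered by finitely many of the $a_n$, since this is exactly what makes the diagonal choice of the $d_n$ possible and, at the same time, guarantees that $D$ evades the countable part $\mathcal B$ while remaining inside the region $Z$ from which the complementary family is almost disjoint.
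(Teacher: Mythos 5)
Your proof is correct and matches the argument the paper intends: the paper states this proposition without proof, remarking only that ``in a MAD family we cannot separate an infinite countable set from its complement,'' and your diagonalization (extracting an infinite $D\subseteq Z$ almost disjoint from every $a_n$ and, via $D\subseteq Z$, from every member of $\A\setminus\mathcal B$) is exactly the standard verification of that fact. No gaps; the key step, that $Z$ is not almost covered by finitely many of the $a_n$, is justified properly.
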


It is straightforward from that definition that strongly $(\aleph_0,<\mathfrak c)$-separated almost disjoint families are strongly $\aleph_0$-separated and these two definitions are the same under CH. Moreover, it was proved in \cite{paulsergio} that almost-normal almost disjoint families are strongly $\aleph_0$-separated. It was noticed in \cite{vinichim} that under the assumption that $\mathfrak p>\omega_1$ there exists a strongly $(\aleph_0,<\mathfrak c)$-separated almost disjoint family that is not almost-normal (in fact any Luzin family would have this property).

\begin{prop}[CH]
There exists a pseudonormal almost disjoint family that is not almost-normal.
\end{prop}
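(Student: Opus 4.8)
The plan is to leverage the two characterizations just proved for families of branches and reduce the statement to a separation of classical special sets of reals. By Proposition~\ref{prop: lambda iff (w,c)-sep}, $\mathcal A_X$ is pseudonormal exactly when $X\subseteq 2^\omega$ is a $\lambda$-set, and by Theorem~\ref{teo: sigma if and only if almost-normal}, $\mathcal A_X$ is almost-normal exactly when $X$ is a $\sigma$-set. Hence it suffices to produce, under CH, a set $X\subseteq 2^\omega$ which is a $\lambda$-set but \emph{not} a $\sigma$-set; the induced family $\mathcal A_X$ is then automatically pseudonormal but not almost-normal. Note that these two classes are genuinely nested: every $\sigma$-set is a $\lambda$-set, since in a $\sigma$-set every $F_\sigma$ set is $G_\delta$ and countable sets are always $F_\sigma$, so a countable set is $G_\delta$. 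Thus what I need is precisely an example witnessing $\sigma \subsetneq \lambda$, and the whole problem is transferred from the $\Psi$-space to the real line.

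The existence of such an $X$ under CH is classical, and in the final write-up I would simply cite \cite{miller1984special} and invoke Proposition~\ref{prop: lambda iff (w,c)-sep} and Theorem~\ref{teo: sigma if and only if almost-normal}. To construct it by hand I would run a transfinite recursion of length $\omega_1$. Fix a dense, co-dense $G_\delta$ set $G\subseteq 2^\omega$ (for instance $G=\{x:\exists^\infty n\ x(2n)=0\}$), which is not $F_\sigma$ and for which both $G$ and $2^\omega\setminus G$ have size $\mathfrak c$. Using CH, enumerate all $F_\sigma$ subsets of $2^\omega$ as $\langle P_\xi:\xi<\omega_1\rangle$, and build $X=\{x_\alpha:\alpha<\omega_1\}$ with the designated subset $A=X\cap G$. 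Since $A=X\cap G$ with $G$ a $G_\delta$ of $2^\omega$, $A$ is automatically a relative $G_\delta$ of $X$. I would maintain two families of requirements. For the $\lambda$-requirement, whenever a countable set $D\subseteq X$ has had all its points placed, I reserve a \emph{meager} $G_\delta$ hull $H_D\supseteq D$ disjoint from the countably many points placed so far outside $D$, and demand that every later point outside $D$ avoid $H_D$; then $D=X\cap H_D$ is a relative $G_\delta$, and by the usual $\omega_1$-bookkeeping every countable subset is eventually handled. For the failure of $\sigma$, at the stage assigned to $\xi$ I place a point of $X$ in the nonempty set $G\triangle P_\xi$ (nonempty because $G$ is not $F_\sigma$), so that $X\cap G\neq X\cap P_\xi$; doing this for every $\xi$ shows $A$ is not of the form $X\cap P$ for any $F_\sigma$ set $P$, i.e.\ $A$ is not a relative $F_\sigma$, so $X$ is not a $\sigma$-set.

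The hard part is reconciling these two families of demands. Being a $\lambda$-set, $X$ is perfectly meager (as recorded in the introduction), hence meager in $2^\omega$ (apply perfect meagerness to the perfect set $2^\omega$ itself); thus all of $X$, and all the hulls $H_D$, live inside a meager region, and I cannot fall back on a Baire-category argument to find diagonalizing points in large sets. The genuine difficulty is that for an adversarial $F_\sigma$ set $P_\xi$ the symmetric difference $G\triangle P_\xi$, although nonempty, may itself be meager: since $G$ has the Baire property it agrees modulo a meager set with some open, hence $F_\sigma$, set. I must therefore place a diagonalizing point inside a possibly meager set while respecting the meager hulls already committed by the $\lambda$-requirements, and the two meager constraints must be prevented from colliding. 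The way I would organize this is to keep each hull nowhere dense and thin enough to leave the finitely many currently pending symmetric differences uncovered, and to fix each diagonalizing point \emph{before} committing any hull that would subsequently have to avoid it (a single point being trivial to dodge). This interleaving is the delicate heart of the argument; once it is arranged, $X$ is a $\lambda$-set that is not a $\sigma$-set, and $\mathcal A_X$ is the desired pseudonormal, non-almost-normal almost disjoint family.
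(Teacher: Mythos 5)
Your core argument is exactly the paper's: reduce via Proposition~\ref{prop: lambda iff (w,c)-sep} and Theorem~\ref{teo: sigma if and only if almost-normal} to the existence under CH of a $\lambda$-set that is not a $\sigma$-set, and cite the classical literature for that (the paper cites \cite{Mauldin1977rectangles}). Your supplementary by-hand recursion is not needed and is left incomplete at the admitted ``delicate heart,'' but since you explicitly fall back on the citation, the proof stands as essentially identical to the paper's.
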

\begin{proof}
Notice that by Theorem~\ref{teo: sigma if and only if almost-normal} and Proposition~\ref{prop: lambda iff (w,c)-sep}, it suffices to show that there exist a set $X\subseteq 2^\omega$ such that $X$ is a $\lambda$-set that is not a $\sigma$-set. Such a set exists under CH  \cite{Mauldin1977rectangles}.
\end{proof}

Recall that in a MAD family $\mathcal A$, it is not possible to separate any countable subfamily $\mathcal B$ from $\mathcal A\setminus \mathcal B$. In particular, the existence of an almost-normal MAD family would give an example of almost-normal almost disjoint family that is not strongly $(\aleph_0, \mathfrak c)$-separated. This discussion can be summarized by the following diagram:

\begin{center}
\begin{tikzcd}[column sep=1.2cm, row sep=1.3cm]
{(\aleph_0,<\mathfrak c)\text{-separated}} 
        \arrow[d, Rightarrow] 
        \arrow[rd, dashed, shift left, degil] 
        \arrow[r, dashed, bend left, degil]                                & 
\text{pseudonormal}
        \arrow[l, Rightarrow] 
        \arrow[d, dashed, degil, bend right]                                \\
{\aleph_0\text{-separated}} 
        \arrow[u, dotted, bend left, "\text{(true under CH) } ?"]                                   &  
\text{almost-normal} 
        \arrow[l, Rightarrow] 
        \arrow[lu, dotted, shift left=2, "?"] 
        \arrow[u, dotted, shift right=2, bend right, "? \,\, \big(\begin{smallmatrix} \text{false if there exists an} \\ \text{almost-normal MAD family} \end{smallmatrix}\big)"']
\end{tikzcd}
\end{center}

In the preceding diagram the double arrows are the results that holds in ZFC, the crossed dashed arrows are counter-examples that assumes additional combinatorial axioms and the dotted arrows are implications that remain unknown.

\begin{question}
In ZFC, is every strongly $\aleph_0$-separated almost disjoint family is strongly $(\aleph_0,<\!\mathfrak c)$-separated?
\end{question}

\begin{question}
Are almost-normal almost disjoint families strongly $(\aleph_0,<\!\mathfrak c)$-separated? Assuming additional axioms, can one construct an almost-normal almost disjoint family that is not strongly $(\aleph_0,<\!\mathfrak c)$-separated?
\end{question}

In the previous section we mentioned that in \cite{nluzin} it was proved that $\mathcal A$ is potentially normal iff $\mathcal A$ does not contain $n$-Luzin gaps. To prove this theorem, the authors of \cite{nluzin} used a forcing notion denoted by $\mathcal S_{\mathcal B, \mathcal C}$. Given an almost disjoint family $\mathcal A$ and disjoint subsets $\mathcal B, C$ of $\mathcal A$, $\mathcal S_{\mathcal B, \mathcal C}$ is the set of all triples $(s, \mathcal F, \mathcal G)$ such that $s \in 2^{<\omega}$, $\mathcal F \in [\mathcal B]^{<\omega}$, $\mathcal G \in [\mathcal C]^{<\omega}$ and $(\bigcup \mathcal F)\cap(\bigcup \mathcal G)\subseteq |s|$. We order $\mathcal S_{\mathcal B, \mathcal C}$ by letting $(s, \mathcal F, \mathcal G)\leq (s', \mathcal F', \mathcal G')$ iff $s'\subseteq s$, $\mathcal F'\subseteq \mathcal F$, $\mathcal G'\subseteq \mathcal G$, $\forall n \in |s|\setminus|s'|( n \in \bigcup \mathcal F'\rightarrow s(n)=1)$ and $\forall n \in |s|\setminus|s'|( n \in \bigcup \mathcal F'\rightarrow s(n)=0)$. By standard density arguments it is clear that if $G$ is a generic filter then $\bigcup\{s: (s, \mathcal F, \mathcal G) \in G\}$ is a characteristic function for a partitioner of $\mathcal B\cup \mathcal C$ separating $\mathcal B$ from $\mathcal C$.

In general this poset does not need to preserve cardinals, but it is c.c.c. (in fact, $\sigma$-centered) if either $|\mathcal B|=\omega$ or $|\mathcal C|=\omega$. So iterating this poset with standard bookkeping techniques we get the following result:

\begin{prop}
Every almost disjoint family is potentially strongly $(\aleph_0, \mathfrak c)$-separated. Moreover, we can show that by using a poset which does not increase the value of the continuum.
\end{prop}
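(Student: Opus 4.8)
The plan is to \emph{reduce the problem to ground-model subfamilies} and then force with a $\sigma$-centered poset assembled from the iterands $\mathcal S_{\mathcal B,\A\setminus\mathcal B}$ recalled just before the statement. The crucial observation, which I would isolate as a lemma, is that once a c.c.c.\ forcing has separated \emph{every} countable subfamily lying in the ground model from its complement, it has automatically made $\A$ strongly $(\aleph_0,\mathfrak c)$-separated; in particular there is no need to chase the countable subfamilies that appear only in intermediate or final extensions. This is what lets me avoid a long finite-support iteration with delicate bookkeeping (which is awkward precisely when $\mathfrak c$ is singular) and instead use a single iteration of length at most $\mathfrak c$ indexed by the ground-model countable subfamilies.

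For the reduction lemma, let $\mathbb P$ be c.c.c.\ with generic $G$, and suppose in $V[G]$ that every countable $\mathcal B'\subseteq\A$ that belongs to $V$ is separated from $\A\setminus\mathcal B'$ by a partitioner. Given an arbitrary countable $\mathcal B\in V[G]$ with $\mathcal B\subseteq\A$, c.c.c.-ness yields a ground-model countable $\mathcal B'$ with $\mathcal B\subseteq\mathcal B'\subseteq\A$: fix a name for an enumeration of $\mathcal B$ and cover the countably many possible values of each term by a countable antichain, so the $V$-set $\mathcal B'$ of all such values works. Let $Z_0$ separate $\mathcal B'$ from $\A\setminus\mathcal B'$. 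Since $\mathcal B'$ is a countable almost disjoint family, a standard least-index construction (fix an enumeration $\mathcal B'=\{d_n:n\in\omega\}$ and put $k$ into $W$ iff the least $n$ with $k\in d_n$ indexes an element of $\mathcal B$) produces, inside $V[G]$, a set $W$ with $b\subseteq^* W$ for $b\in\mathcal B$ and $d\cap W=^*\emptyset$ for $d\in\mathcal B'\setminus\mathcal B$; this uses only finiteness of pairwise intersections. Then $Z=Z_0\cap W$ is a partitioner separating $\mathcal B$ from $\A\setminus\mathcal B$: elements of $\mathcal B$ are $\subseteq^* Z$, elements of $\A\setminus\mathcal B'$ are almost disjoint from $Z_0$, and elements of $\mathcal B'\setminus\mathcal B$ are almost disjoint from $W$, so every $a\in\A$ is almost contained in or almost disjoint from $Z$.

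It remains to produce a c.c.c.\ forcing separating every ground-model countable subfamily without enlarging the continuum. For countable $\mathcal B$ the poset $\mathcal S_{\mathcal B,\A\setminus\mathcal B}$ is $\sigma$-centered: grouping conditions by their first two coordinates $(s,\mathcal F)\in 2^{<\omega}\times[\mathcal B]^{<\omega}$ gives countably many centered pieces, since two conditions $(s,\mathcal F,\mathcal G)$ and $(s,\mathcal F,\mathcal G')$ have the common extension $(s,\mathcal F,\mathcal G\cup\mathcal G')$. Fixing an enumeration $\{\mathcal B_\xi:\xi<\theta\}$, with $\theta\le\mathfrak c$, of all countable subfamilies of $\A$ in $V$, I would take the finite-support iteration $\langle\mathbb P_\alpha,\dot{\mathbb Q}_\alpha:\alpha<\theta\rangle$ with $\dot{\mathbb Q}_\xi$ a name for $\mathcal S_{\mathcal B_\xi,\A\setminus\mathcal B_\xi}$; each iterand stays $\sigma$-centered in the relevant intermediate model (the ground-model cover by centered pieces is preserved), so the iteration is c.c.c. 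At stage $\xi$ the generic is generic over the intermediate model and hence meets the ground-model dense sets witnessing that $\bigcup\{s:(s,\mathcal F,\mathcal G)\in G_\xi\}$ is the characteristic function of a partitioner separating $\mathcal B_\xi$ from $\A\setminus\mathcal B_\xi$; such separations are absolute upward, so they survive to the final model. Thus every ground-model countable subfamily is separated, and the reduction lemma gives that $\A$ is strongly $(\aleph_0,\mathfrak c)$-separated in $V[G]$.

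Finally, for the \emph{moreover} clause, each iterand has size at most $\mathfrak c$ and the length is at most $\mathfrak c$, so the finite-support iteration $\mathbb P$ has $|\mathbb P|\le\mathfrak c$; a c.c.c.\ poset of size at most $\mathfrak c$ has at most $\mathfrak c^{\aleph_0}=\mathfrak c$ nice names for subsets of $\omega$, hence forces $2^{\aleph_0}\le\mathfrak c$, and since the old reals and cardinals are preserved the continuum is unchanged. The step I expect to require the most care is the reduction lemma: verifying that intersecting the two partitioners $Z_0$ and $W$ yields a genuine partitioner (condition (1) of the definition, i.e.\ that \emph{every} $a\in\A$ is either almost contained in or almost disjoint from $Z_0\cap W$), and that the least-index separation can be carried out inside $V[G]$ from the $V[G]$-partition of $\mathcal B'$. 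Everything else is a routine assembly of the $\sigma$-centered iterand recalled in the text.
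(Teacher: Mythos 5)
Your proof is correct, and it fills in a step that the paper leaves entirely to the phrase ``iterating this poset with standard bookkeeping techniques.'' The forcing itself is the same in both treatments: a finite-support iteration of the $\sigma$-centered posets $\mathcal S_{\mathcal B,\mathcal A\setminus\mathcal B}$, each of which generically adds a partitioner. Where you genuinely diverge is in how the countable subfamilies that appear only in the extension are handled. The paper's intended route is bookkeeping: enumerate (names for) countable subfamilies arising at intermediate stages and revisit them later in the iteration. You instead prove a reduction lemma showing that separating every \emph{ground-model} countable subfamily already suffices: a c.c.c.\ forcing covers any new countable $\mathcal B\subseteq\mathcal A$ by a ground-model countable $\mathcal B'\subseteq\mathcal A$, and a partitioner $Z_0$ for $\mathcal B'$ versus $\mathcal A\setminus\mathcal B'$ can be intersected with a least-index partitioner $W$ of the countable almost disjoint family $\mathcal B'$ (built in $V[G]$, where $\mathcal B$ lives) to give a partitioner $Z_0\cap W$ for $\mathcal B$ versus $\mathcal A\setminus\mathcal B$ --- and your three-case check that $Z_0\cap W$ decides every $a\in\mathcal A$ is exactly right. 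This buys you a single pass of length at most $\lvert[\mathcal A]^{\le\omega}\rvert\le\mathfrak c$ indexed by ground-model objects, with no names to chase, and it makes the ``does not increase the continuum'' clause immediate from $\lvert\mathbb P\rvert\le\mathfrak c$ plus the nice-name count; the cost is the extra (but elementary and correctly verified) reduction lemma. Both arguments are sound; yours is arguably the cleaner writeup of what the paper asserts.
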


As we have mentioned, this is not true for normality (e.g. Luzin families are not potentially normal, and we have provided an example of a Luzin$^*$-family which is not potentially almost-normal).

\section{Weak \texorpdfstring{$\lambda$}{lambda}-sets and strongly \texorpdfstring{$\aleph_0$}{aleph0}-separated almost disjoint families }
In this section we introduce the weak $\lambda$-sets, a weakening of the notion of $\lambda$-sets which relate to strongly $\aleph_0$-separated almost disjoint families in the same way as $Q$-sets relate to normal almost disjoint families. 

We consider $[\omega]^{\omega}$ with the (Polish) topology obtained by identifying $[\omega]^{\omega}$ with the subspace of $2^\omega$ corresponding to the characteristic functions of infinite sets. As mentioned in the introduction, if $\mathcal A$ is normal (pseudonormal) then $\mathcal A$ is a $Q$-set ($\lambda$-set) when viewed as a subspace of $[\omega]^\omega$. \cite{qsetsnormality}

\begin{defin}
We say that $X\subseteq 2^\omega$ is a weak $\lambda$-set iff for every pair of countable disjoint sets $Y,Z\subseteq X$ there exist a $G_\delta$-$F_\sigma$ set $H$ of $X$  such that $Y\subseteq H$ and $Z\subseteq X\setminus H$.
\end{defin}

Notice that given an almost disjoint family $\A$ and a partitioner $Z$ for $\A$, the sets $\mathcal B=\{a\in \mathcal A: a\subseteq^* Z\}$ and $\A\setminus\mathcal B =\{a\in \A: a\cap Z=^*\emptyset\}$ are relative $F_\sigma$'s and $G_\delta$'s of $A$ simultaneously.

\begin{prop}
If $\mathcal A$ is a strongly $\aleph_0$-separated almost disjoint family, then $\mathcal A$ is a weak $\lambda$-set of $\mathcal P(\omega)$.
\end{prop}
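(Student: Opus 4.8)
The plan is to take an arbitrary pair of countable disjoint sets $Y,Z\subseteq \mathcal A$ and read the required $G_\delta$-$F_\sigma$ set directly off a partitioner, using the observation recorded just before the statement. Viewing $Y$ and $Z$ as countable disjoint subfamilies $\mathcal B=Y$ and $\mathcal C=Z$ of $\mathcal A$, strong $\aleph_0$-separation supplies a partitioner $J\subseteq\omega$ with $b\subseteq^* J$ for all $b\in Y$ and $c\cap J=^*\emptyset$ for all $c\in Z$, and such that every $a\in\mathcal A$ satisfies $a\subseteq^* J$ or $a\cap J=^*\emptyset$ (and not both, since the members of $\mathcal A$ are infinite). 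I would then set $H=\{a\in\mathcal A: a\subseteq^* J\}$, whose complement in $\mathcal A$ is exactly $\{a\in\mathcal A: a\cap J=^*\emptyset\}$. By construction $Y\subseteq H$ and $Z\subseteq \mathcal A\setminus H$, so only the descriptive complexity of $H$ remains to be verified.

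The key step is to check that $H$ is simultaneously $F_\sigma$ and $G_\delta$ in $\mathcal A$; this is precisely the remark preceding the statement, but I would spell it out. Identifying each $a\in\mathcal A$ with its characteristic function in $2^\omega$, the condition $a\subseteq^* J$ unfolds as
\[
a\subseteq^* J \iff \exists n\, \forall m\geq n\, (m\in a \implies m\in J),
\]
and for each fixed $n$ the set $\{a\in\mathcal A: \forall m\geq n\,(m\in a\implies m\in J)\}$ is the intersection with $\mathcal A$ of $\bigcap_{m\geq n,\, m\notin J}\{f\in 2^\omega: f(m)=0\}$, a countable intersection of clopen sets, hence relatively closed in $\mathcal A$. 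Thus $H$ is a countable union of relatively closed sets, i.e.\ $F_\sigma$ in $\mathcal A$. Running the identical argument with the complementary condition $a\cap J=^*\emptyset$ (which reads $\exists n\,\forall m\geq n\,(m\in J\implies m\notin a)$) shows that $\mathcal A\setminus H$ is also $F_\sigma$ in $\mathcal A$, and therefore $H$ is $G_\delta$ in $\mathcal A$. This yields the desired $G_\delta$-$F_\sigma$ separating set and completes the argument.

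I do not expect a genuine obstacle: once the correct candidate $H$ is extracted from the partitioner, both the inclusions $Y\subseteq H$, $Z\subseteq\mathcal A\setminus H$ and the two $F_\sigma$ computations are routine. The only point deserving care is the bookkeeping that $J$ functions as a partitioner for the \emph{whole} family (condition (1) of the definition of partitioner), so that $H$ and $\mathcal A\setminus H$ genuinely partition $\mathcal A$ into the ``$\subseteq^* J$'' and ``$\cap J=^*\emptyset$'' pieces; this is exactly what makes the complement of the $F_\sigma$ set $H$ itself $F_\sigma$, and hence makes $H$ a $G_\delta$. It is worth noting that the hypothesis is applied only to countable $Y,Z$, which is precisely what strong $\aleph_0$-separation provides, and that nowhere is $\mathcal A$ assumed to be of the special form $\mathcal A_X$.
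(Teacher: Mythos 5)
Your proof is correct and follows essentially the same route as the paper: extract a partitioner $J$ from strong $\aleph_0$-separation, take $H=\{a\in\mathcal A: a\subseteq^* J\}$, and observe that both $H$ and its complement $\{a\in\mathcal A: a\cap J=^*\emptyset\}$ are relative $F_\sigma$'s, so $H$ is simultaneously $F_\sigma$ and $G_\delta$. Your version is in fact slightly cleaner, since you correctly apply the hypothesis to \emph{countable} disjoint subfamilies (the paper's proof says ``uncountable,'' an evident slip) and you make explicit why the two pieces genuinely partition $\mathcal A$.
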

\begin{proof}
Given two uncountable disjoint families $\mathcal B, \mathcal C\subseteq \mathcal A$, by our assumption there exist a set $J\subseteq \omega$ such that:
\[
\mathcal B \subseteq \{a\in\mathcal A:a\subseteq^* J  \}\qquad \text{ and }\qquad \mathcal C\subseteq \{a\in\mathcal A: a\cap J=^*\emptyset\}.
\]

Observe that these sets can be rewritten as:
\begin{gather*} 
\bigcup\limits_{n\in\omega}\bigcap\limits_{m\in\omega} \{ a \in \mathcal A:  m\in a\setminus n \implies m\in  J\}\\ 
\bigcup\limits_{n\in\omega}\bigcap\limits_{m\in\omega} \{ a \in \mathcal A: m\in a\setminus n \implies m\notin J\}
\end{gather*}

Thus they are are $F_\sigma$ sets of $\mathcal P(\omega)$ containing $\mathcal B$ and $\mathcal C$, respectively. Since $J$ is a partitioner, the conclusion follows.
\end{proof}

\begin{prop}
Let $X\subseteq 2^\omega$. Then $X$ is a weak $\lambda$-set iff $\mathcal A_X$ is strongly $\aleph_0$-separated.
\end{prop}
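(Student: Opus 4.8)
The plan is to derive both implications from Proposition~\ref{prop: fsigma sse separa}, exploiting the elementary observation that a subset $H\subseteq X$ is simultaneously relatively $G_\delta$ and relatively $F_\sigma$ in $X$ precisely when both $H$ and $X\setminus H$ are relatively $F_\sigma$ in $X$. This is exactly the symmetric hypothesis that Proposition~\ref{prop: fsigma sse separa} needs, so the $G_\delta$-$F_\sigma$ sets appearing in the definition of weak $\lambda$-set and the partitioners witnessing strong $\aleph_0$-separation should translate into one another with essentially no extra work.

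For the forward direction, suppose $X$ is a weak $\lambda$-set and let $\mathcal B,\mathcal C\subseteq\mathcal A_X$ be countable disjoint subfamilies. I would first set $Y=\langle\mathcal B\rangle_X$ and $Z=\langle\mathcal C\rangle_X$, so that $Y,Z$ are countable disjoint subsets of $X$ with $\mathcal B=\mathcal A_Y$ and $\mathcal C=\mathcal A_Z$. Weak $\lambda$-ness produces a $G_\delta$-$F_\sigma$ set $H$ of $X$ with $Y\subseteq H$ and $Z\subseteq X\setminus H$. Since $H$ is both relatively $F_\sigma$ and relatively $G_\delta$, both $H$ and $X\setminus H$ are relatively $F_\sigma$, so Proposition~\ref{prop: fsigma sse separa} applied to the pair $H$, $X\setminus H$ yields a partitioner $J$ separating $\mathcal A_H$ from $\mathcal A_{X\setminus H}$. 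As $\mathcal B=\mathcal A_Y\subseteq\mathcal A_H$ and $\mathcal C=\mathcal A_Z\subseteq\mathcal A_{X\setminus H}$, the same $J$ separates $\mathcal B$ from $\mathcal C$, which is exactly strong $\aleph_0$-separation. This is essentially Corollary~\ref{cor:FsigmaGdelta}, so I could also just invoke it; the only caveat is that there $F$ is taken $F_\sigma$-$G_\delta$ in all of $2^\omega$, whereas here $H$ is only relatively so, and the direct argument via Proposition~\ref{prop: fsigma sse separa} avoids having to extend $H$ to the ambient space.

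For the converse, assume $\mathcal A_X$ is strongly $\aleph_0$-separated and fix countable disjoint $Y,Z\subseteq X$. Applying the separation hypothesis to $\mathcal A_Y$ and $\mathcal A_Z$ gives a partitioner $J\subseteq 2^{<\omega}$ with $a_y\subseteq^* J$ for all $y\in Y$ and $a_z\cap J=^*\emptyset$ for all $z\in Z$. The key step is to notice that, because $J$ is a partitioner for $\mathcal A_X$, the set $H=\{x\in X: a_x\subseteq^* J\}$ satisfies $\mathcal A_H=\{a\in\mathcal A_X: a\subseteq^* J\}$ and $\mathcal A_{X\setminus H}=\{a\in\mathcal A_X: a\cap J=^*\emptyset\}$, so that $J$ is in fact a partitioner for the \emph{complementary} pair $\mathcal A_H$, $\mathcal A_{X\setminus H}$. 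Proposition~\ref{prop: fsigma sse separa} then tells me $H$ and $X\setminus H$ are both relatively $F_\sigma$, i.e. $H$ is a $G_\delta$-$F_\sigma$ set of $X$; and the containments $Y\subseteq H$, $Z\subseteq X\setminus H$ are immediate from the choice of $J$, using that each $a_z$ is infinite so that $a_z\cap J=^*\emptyset$ forbids $a_z\subseteq^* J$. Hence $X$ is a weak $\lambda$-set.

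I expect the only genuinely delicate point to be the bookkeeping around the partitioner dichotomy in the converse: one must verify that the ``$a\subseteq^* J$ or $a\cap J=^*\emptyset$'' alternative for every $a\in\mathcal A_X$ is what makes $\mathcal A_H$ and $\mathcal A_{X\setminus H}$ genuinely complementary, so that Proposition~\ref{prop: fsigma sse separa} applies and delivers the relative $F_\sigma$-ness of \emph{both} $H$ and its complement. Everything else is a direct unwinding of the definitions.
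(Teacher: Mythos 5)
Your proof is correct and follows essentially the same route as the paper: both directions reduce to Proposition~\ref{prop: fsigma sse separa}, via the observation that a relative $G_\delta$-$F_\sigma$ subset of $X$ is exactly one for which both it and its complement are relatively $F_\sigma$. If anything, your write-up is more careful than the paper's rather terse version, in particular in spelling out why the partitioner dichotomy makes $\mathcal A_H$ and $\mathcal A_{X\setminus H}$ genuinely complementary in the converse direction.
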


\begin{proof}
First, suppose $X$ is a weak $\lambda$-set. Let $\mathcal B, \mathcal C \subset \A_X$ be countable disjoint sets. Let $Y, Z\subseteq X$ be (disjoint) sets such that $\mathcal B=\mathcal A_Y$ and $\mathcal C=\mathcal A_Z$. There exists a partitioner $P$ separating $X$ and $Y$, so by Proposition~\ref{prop: fsigma sse separa}, $\mathcal B$ and $\mathcal C$ are separated by a partition of $G_\delta$'s.

Let $Y, Z\subset X$ be disjoint countable sets. Then $\mathcal \A_Y$ and $\mathcal A_{Z}$ are separated by some partitioner. Then, again by Proposition~\ref{prop: fsigma sse separa}, $Y$ and $Z$ is separated by a partition of $G_\delta$'s.
\end{proof}

Of course, a $\lambda$-set is a weak $\lambda$-set. Moreover, we have the following:

\begin{teo}\label{teo: weak lambda is perf meager}
Let $X$ be a Polish space. Then every weak $\lambda$-set of $X$ is perfectly meager.
\end{teo}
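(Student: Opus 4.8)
The plan is to argue by contradiction, adapting the classical proof that $\lambda$-sets are perfectly meager, but feeding the hypothesis a \emph{pair} of countable sets rather than a single one, since the weak $\lambda$-property only guarantees separation of pairs. So suppose $A\subseteq X$ is a weak $\lambda$-set that is not perfectly meager, and fix a perfect set $P\subseteq X$ with $A\cap P$ non-meager in $P$. The guiding idea is that if $Y$ and $Z$ are two disjoint countable subsets of $A$ that are both dense in a perfect set $P'$, then the ambiguous ($G_\delta$ and $F_\sigma$) set separating them forces $A\cap P'$ to be meager, which is the desired contradiction.

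First I would localize. By a standard Baire-category argument (remove the largest open subset of $P$ on which $A\cap P$ is meager, then pass to the closure of the interior of what remains) one obtains a nonempty perfect set $P'\subseteq P$ such that $A\cap P'$ is simultaneously dense and non-meager in $P'$. Because $P'$ is perfect and $A\cap P'$ is dense, $A\cap P'$ meets every nonempty relatively open subset of $P'$ in an infinite set. Enumerating a countable base $\{B_n:n\in\omega\}$ of $P'$ and choosing greedily, for each $n$, two fresh distinct points $y_n,z_n\in A\cap P'\cap B_n$, I obtain disjoint countable sets $Y=\{y_n:n\in\omega\}$ and $Z=\{z_n:n\in\omega\}$ contained in $A$, each dense in $P'$.

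Now I apply the weak $\lambda$-property to $Y$ and $Z$: there is a set $H$ that is at once $G_\delta$ and $F_\sigma$ relative to $A$, with $Y\subseteq H$ and $Z\cap H=\emptyset$. Write $H=A\cap G=A\cap F$, where $G$ is a $G_\delta$ and $F$ an $F_\sigma$ of $X$. Since $Y\subseteq G$ and $Y$ is dense in $P'$, the set $G\cap P'$ is a dense $G_\delta$ of $P'$, hence comeager in $P'$. On the other side, $Z\subseteq A$ and $Z\cap H=\emptyset$ give $Z\cap F=\emptyset$, so $Z\subseteq X\setminus F$; as $Z$ is dense in $P'$ and $X\setminus F$ is a $G_\delta$, the set $(X\setminus F)\cap P'$ is comeager, i.e. $F\cap P'$ is meager in $P'$. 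Intersecting the identity $A\cap G=A\cap F$ with $P'$ yields $A\cap P'\cap G=A\cap P'\cap F\subseteq F\cap P'$, which is meager, so $A\cap P'\cap G$ is meager; and $(A\cap P')\setminus G\subseteq P'\setminus G$ is meager because $G\cap P'$ is comeager. Hence $A\cap P'=(A\cap P'\cap G)\cup\big((A\cap P')\setminus G\big)$ is meager in $P'$, contradicting the choice of $P'$.

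I expect the main obstacle to be conceptual rather than computational: recognizing that the right way to use the weak $\lambda$-property is to supply it with two dense disjoint sets and then to play the $G_\delta$ side of $H$ against the $F_\sigma$ side through the single identity $A\cap G=A\cap F$, with density making one side comeager and the other meager. The localization step producing a perfect $P'$ on which $A$ is at the same time dense and non-meager is routine but must be stated carefully so that both properties survive simultaneously; everything after that is elementary Baire-category bookkeeping using only monotonicity of meagerness and finite unions of meager sets, so no regularity (Baire property) of $A$ itself is needed.
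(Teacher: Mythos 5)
Your proof is correct and uses the same essential mechanism as the paper's: feed the weak $\lambda$-property two disjoint countable sets that are both dense in the relevant perfect piece, then play the $G_\delta$ side of the separator (comeager, since it contains a dense set) against its $F_\sigma$ side (whose complement is a $G_\delta$ containing the other dense set, hence the separator meets the perfect piece in a meager set), concluding that $A\cap P'$ is meager. The only difference is cosmetic: you argue by contradiction after localizing to a perfect $P'$ on which $A$ is dense and non-meager, whereas the paper argues directly by splitting $Z\cap P$ into a countable part and a dense-in-itself part $Y$ and working inside $\overline{Y}$ via the sets $G_n=A_n\cup(X\setminus\overline{Y})$.
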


\begin{proof}
Let $Z$ be a weak $\lambda$ subset of $X$. We can suppose that $Z$ is uncountable. Fix a perfect set $P$. Write $Z\cap P = Y\cup C$ where $Y=\{x \in Z\cap P: \text{for every open nhood } U\text{ of $x$, } |U\cap Z\cap P|\geq \omega_1\}$ and $C=Z\cap P\setminus Y$. Notice that $C$ is countable since the space is second countable, and that $Y$ is dense in itself and nonempty. It suffices to show that $Y$ is meager in $P$.

It is straightforward to construct two countable disjoint subsets $F,K\subset Y$ such that $\overline Y=\overline F=\overline K$.

Since $Z$ is a weak $\lambda$-set, there exist sequences of open sets $A_n$, $B_n$ ($n \in \omega$) such that:

\newpage
\begin{enumerate}
    \item $F\subset \bigcap\limits_{n\in\omega} A_n$;
    \item $K\subset \bigcap\limits_{n\in\omega} B_n$;
    \item $\bigcap\limits_{n\in\omega} A_n \cap Z \cap \bigcap\limits_{n\in\omega} B_n=\emptyset$;
    \item $Z\cap \bigcap\limits_{n\in\omega} B_n = Z\setminus \bigcap\limits_{n\in\omega} A_n$.
\end{enumerate}

For each $n\in\omega$, let $G_n = A_n\cup (X\setminus \overline Y)$. Then each $G_n$ is an open dense set, furthermore, $G_n\cap P$ is dense in $P$ because:
\[
\overline{G_n\cap P} =\underbrace{ \overline{A_n \cap P}}_{\supseteq \overline {F}=\overline{Y}} \cup \overline{P\setminus \overline Y} = P.
\]

Since $G\cap P= \bigcap\limits_{n\in\omega}G_n\cap P$ is a dense $G_\delta$ in $P$, it is comeager in $P$ and we have that:
\[
Y\cap G \subset Y\cap \bigcap_{n\in\omega}A_n \subset Y \setminus \bigcap_{n\in\omega} B_n = \bigcup_{n\in\omega} Y\setminus B_n.
\]

Notice that for each $n$, $\overline{Y\setminus B_n }$ has empty interior since $\overline{Y\setminus B_n}\subset \overline Y \setminus B_n$ and $B_n$ is dense in $\overline Y$. Thus $Y\cap G$ is meager in $P$. But also, $Y\setminus G=Y\setminus(P\cap G)$ is meager in $P$. Hence, $Y$ is meager in $P$.
\end{proof}

Recall that a set $X$ of reals is a $\lambda'$-set if for each countable $A$, $X\cup A$ is a $\lambda$-set. Analogously, we define a set to be a weak $\lambda'$-set if for every countable set $A$, $X\cup A$ is a weak $\lambda$ set. It follows easily that weak $\lambda'$-sets are weak $\lambda$-sets. Moreover, in the light of the Theorem~\ref{teo: weak lambda is perf meager}, we have the following diagram of implications between special sets of reals:

\begin{center}
    \begin{tikzcd}[column sep=1.5cm, row sep=0.8cm]
        & 
Q\text{-set}
        \arrow[d, Rightarrow]       &
                                    &
                                    &
                                    \\
\text{almost $Q$-set} 
        \arrow[r, Leftrightarrow]    &
\sigma\text{-set} 
        \arrow[d, Rightarrow]         &
                                    & 
                                    &
                                    \\
\text{Sierpinski set} 
        \arrow[ru, Rightarrow] 
        \arrow[rd, Rightarrow]      & 
\lambda\text{-set} 
        \arrow[r, Rightarrow]        &
\text{weak }\lambda\text{-set} 
        \arrow[r, Rightarrow]       &
\text{Perfectly meager set} 
        \arrow[r, Rightarrow]       & 
s_0\text{-set}                      \\
                                    &
\lambda'\text{-set} 
        \arrow[r, Rightarrow] 
        \arrow[u, Rightarrow]       & 
\text{weak }\lambda'\text{-set}
        \arrow[u, Rightarrow]       &
                                    &
\end{tikzcd}
\end{center}

In \cite{OnQsets}, forcing was used to construct a consistent example of a $Q$-set $X$ which is concentrated on a countable dense subset $F$ of $2^\omega$ ($F$ is in the ground model). The proof actually shows that $X$ is concentrated in every dense subset $F'$ of $F$ which is in the ground model. Therefore, by letting $F_0$, $F_1$ be two disjoint dense subsets of $F$ in the ground model, we get the set $Y=F_0\cup F_1\cup X$ is perfectly meager and not a weak $\lambda$-set (in the forcing extension). This later fact holds since it is easy to verify that a weak $\lambda$-set cannot be concentrated on two pairwise disjoint countable subsets. This discussion yields the following proposition:

\begin{prop} It is consistent that there exists a perfectly meager set which is not a weak $\lambda$-set and it is consistent that the class of weak $\lambda$-sets is not an ideal.
\end{prop}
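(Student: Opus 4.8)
The plan is to make the construction sketched in the paragraph above fully precise. First I would invoke the consistency result of \cite{OnQsets}: in some forcing extension there is a $Q$-set $X\subseteq 2^\omega$ which is concentrated on a ground-model countable dense set $F\subseteq 2^\omega$, and in fact $X$ is concentrated on every ground-model dense subset of $F$. Working in the ground model, I would partition $F$ into two disjoint dense subsets $F_0$ and $F_1$ (a countable dense subset of a space without isolated points splits into two dense pieces); since both lie in the ground model, $X$ remains concentrated on each of them in the extension. I then set $Y=F_0\cup F_1\cup X$ and argue in the extension.

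The first routine point is that $Y$ is concentrated on $F_0$, and symmetrically on $F_1$. Indeed, if $U$ is open with $F_0\subseteq U$, then $Y\setminus U=(F_1\setminus U)\cup(X\setminus U)$, and both pieces are countable: the first because $F_1$ is countable, the second by concentration of $X$ on $F_0$. Swapping the roles of $F_0$ and $F_1$ gives concentration on $F_1$. Thus $Y$ is an uncountable set concentrated on two disjoint countable subsets of itself.

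The heart of the argument is the lemma that no uncountable weak $\lambda$-set can be concentrated on two disjoint countable subsets, which I would prove in contrapositive form. Suppose $Y$ is a weak $\lambda$-set concentrated on disjoint countable sets $F_0,F_1\subseteq Y$. Applying the weak $\lambda$ property to the pair $(F_0,F_1)$ produces a set $H\subseteq Y$ that is simultaneously $G_\delta$ and $F_\sigma$ in $Y$, with $F_0\subseteq H$ and $F_1\subseteq Y\setminus H$. Writing $H=Y\cap\bigcap_n U_n$ with each $U_n$ open and $F_0\subseteq U_n$, concentration on $F_0$ makes each $Y\setminus U_n$ countable, so $Y\setminus H$ is countable. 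Dually, writing $Y\setminus H=Y\cap\bigcap_n V_n$ with each $V_n$ open and $F_1\subseteq V_n$, concentration on $F_1$ makes $H$ countable. Hence $Y=H\cup(Y\setminus H)$ is countable, a contradiction. Combined with the previous paragraph, this shows $Y$ is not a weak $\lambda$-set.

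It remains to record that $Y$ is perfectly meager and to deduce the non-ideal statement. Since $X$ is a $Q$-set it is a weak $\lambda$-set, hence perfectly meager by Theorem~\ref{teo: weak lambda is perf meager}; as perfect meagerness passes to countable sets and is preserved under finite unions, adjoining $F_0\cup F_1$ keeps $Y$ perfectly meager. This gives the first consistency statement. For the second, note that $X$ is a weak $\lambda$-set and $F_0\cup F_1$ is a weak $\lambda$-set (every countable set is trivially a weak $\lambda$-set, since in a countable subspace every subset is at once $G_\delta$ and $F_\sigma$), while their union $Y$ is not a weak $\lambda$-set; so the class fails to be closed under finite unions and in particular is not an ideal. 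The main obstacle is entirely external, namely the forcing construction of the concentrated $Q$-set, which I would simply cite from \cite{OnQsets}; of the steps carried out here, the only delicate point is taking the $G_\delta$ and $F_\sigma$ representations of $H$ relative to $Y$, so that concentration can be applied to $H$ and to its complement separately.
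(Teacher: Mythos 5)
Your proposal is correct and follows essentially the same route as the paper: cite the concentrated $Q$-set from \cite{OnQsets}, split the ground-model dense set into two disjoint dense pieces, and use the lemma that an uncountable weak $\lambda$-set cannot be concentrated on two disjoint countable subsets. The paper leaves that lemma as "easy to verify," and your two-sided application of concentration to the $G_\delta$ and $F_\sigma$ representations of $H$ is exactly the intended verification.
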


It is still possible that every weak $\lambda$-set is a $\lambda$-set (in ZFC), but it is worth noting that the existence of weak $\lambda$-set that is not a $\lambda$-set cannot be weak $\lambda'$.

\begin{question}
Is there, at least consistently, a weak $\lambda$-set that is not a $\lambda$-set? In the negative case, is there a weak $\lambda$-set that is not weak $\lambda'$-set?
\end{question}

The next proposition is proved in section 9 of \cite{douwen1984the}.
\begin{prop}
$\mathfrak b$ is the least size of a non $\lambda$-set and the least size of a non $\sigma$-set.
\end{prop}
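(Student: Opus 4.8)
The plan is to prove both equalities simultaneously, exploiting that every $\sigma$-set is a $\lambda$-set (as recorded in the diagram above). This reduces the work to two statements: \emph{every} set of reals of size $<\mathfrak b$ is a $\sigma$-set (which, via $\sigma\Rightarrow\lambda$, gives both lower bounds at once), and there is a \emph{single} set of size $\mathfrak b$ that is not a $\lambda$-set (which, via $\neg\lambda\Rightarrow\neg\sigma$, gives both upper bounds at once). I work in $\mathbb R$, identifying the irrationals $\mathbb P$ with $\omega^\omega$; since the least size of a non-$\lambda$-set and of a non-$\sigma$-set is the same for every perfect Polish space, this costs nothing, and I will freely use that $\mathfrak b$ has uncountable cofinality (indeed $\mathfrak b$ is regular).

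For the lower bounds, I would show every $X\subseteq\mathbb R$ with $|X|<\mathfrak b$ is a $\sigma$-set. Let $A=X\cap\bigcap_n U_n$ be a relative $G_\delta$, where (replacing $U_n$ by $\bigcap_{i\le n}U_i$) the open sets $U_n$ are decreasing. For $x\in A$ let $r_x(n)$ be the least $k$ with $\operatorname{dist}(x,\mathbb R\setminus U_n)\ge 2^{-k}$, which exists as $x\in U_n$. The family $\{r_x:x\in A\}$ has size $<\mathfrak b$, hence is $\le^*$-bounded by some $g\in\omega^\omega$. Put $C_m=\{x\in X:\forall n\ge m,\ \operatorname{dist}(x,\mathbb R\setminus U_n)\ge 2^{-g(n)}\}$. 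Each $C_m$ is relatively closed in $X$ by continuity of the distance function; every $x\in A$ lies in some $C_m$ by the bound on $r_x$; and conversely any $x\in C_m\cap X$ lies in $U_n$ for all $n\ge m$, hence in all $U_n$ by monotonicity, so $x\in A$. Thus $A=X\cap\bigcup_m C_m$ is a relative $F_\sigma$, so $X$ is a $\sigma$-set. Consequently the least size of a non-$\lambda$-set and of a non-$\sigma$-set are both at least $\mathfrak b$.

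For the upper bounds, I would exhibit a non-$\lambda$-set of size $\mathfrak b$. Fix a $\le^*$-increasing, $\le^*$-unbounded family $\{f_\alpha:\alpha<\mathfrak b\}$ of distinct irrationals (via $\mathbb P\cong\omega^\omega$) and set $X=\mathbb Q\cup\{f_\alpha:\alpha<\mathfrak b\}$. I claim the countable set $\mathbb Q$ is not a relative $G_\delta$ of $X$. If it were, say $\mathbb Q=X\cap\bigcap_n U_n$ with each open $U_n\supseteq\mathbb Q$, then each $\mathbb R\setminus U_n$ is closed in $\mathbb R$ and contained in $\mathbb P$, hence (intersecting with the compact intervals $[-k,k]$) is $\sigma$-compact as a subset of $\omega^\omega$. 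Any compact $K\subseteq\omega^\omega$ is pointwise bounded by some $h$, so $\{\alpha:f_\alpha\in K\}\subseteq\{\alpha:f_\alpha\le^* h\}$, a proper initial segment of size $<\mathfrak b$ by unboundedness; since $\mathfrak b$ has uncountable cofinality, each $\mathbb R\setminus U_n$ meets only $<\mathfrak b$ of the $f_\alpha$, and therefore so does the countable union $X\setminus\mathbb Q=\bigcup_n(X\setminus U_n)$. But $X\setminus\mathbb Q$ contains all $\mathfrak b$ of the $f_\alpha$, a contradiction. Hence $X$ is not a $\lambda$-set, and being non-$\lambda$ it is non-$\sigma$; so both least sizes are at most $\mathfrak b$. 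Combining the two parts gives the proposition.

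The routine points are the relative closedness of the $C_m$ (continuity of $\operatorname{dist}$) and the monotonicity of the $U_n$, which is exactly what lets the lower-bound argument avoid any diagonalization. The step I expect to be the main obstacle is the counting inside the construction: one must combine the geometric fact that neighbourhoods of $\mathbb Q$ have $\sigma$-compact complement in $\mathbb P$, the observation that a compact (hence $\le^*$-bounded) set captures only an initial segment of a $\le^*$-increasing unbounded family, and the uncountable cofinality of $\mathfrak b$ needed to push the bound ``$<\mathfrak b$'' through a countable union. Getting these three ingredients to align is the crux; everything else is bookkeeping.
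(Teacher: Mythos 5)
Your proposal is correct. Note, however, that the paper does not actually prove this proposition: it simply cites Section~9 of van Douwen's Handbook chapter, so there is no in-paper argument to compare against. What you have written is essentially a self-contained reconstruction of the standard proofs one finds there: the lower bound is the usual bounding argument (convert a relative $G_\delta$ into a family of ``rate'' functions $r_x$, dominate them by a single $g$ since $|X|<\mathfrak b$, and read off an $F_\sigma$ decomposition), and the upper bound is Rothberger's classical example $\mathbb Q\cup\{f_\alpha:\alpha<\mathfrak b\}$, using that closed subsets of $\mathbb R$ missing $\mathbb Q$ are $\sigma$-compact in $\mathbb P\cong\omega^\omega$ and that a compact set traps only a proper initial segment of a $\le^*$-increasing unbounded family. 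Both halves check out, including the reduction of the four bounds to two via $\sigma\Rightarrow\lambda$ (which does follow from the paper's definitions by complementation, ignoring the cosmetic ``uncountable'' clause in the definition of $\lambda$-set). Three small points you should make explicit rather than assume: (i) the existence of a $\le^*$-increasing unbounded family of length exactly $\mathfrak b$ (built recursively from any unbounded family of size $\mathfrak b$, using that every subfamily of size $<\mathfrak b$ is bounded); (ii) the regularity of $\mathfrak b$, which you invoke to push ``$<\mathfrak b$'' through the countable unions over $n$ and $k$; and (iii) the transfer of your witness from $\mathbb R$ to $2^\omega$, which works because $X$ has dense complement in $\mathbb R$, hence is zero-dimensional separable metrizable and embeds in $2^\omega$, and both properties are intrinsic to the subspace topology. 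None of these is a gap, only bookkeeping you flagged but did not carry out.
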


The following diagram describes the relations we have done so far for almost disjoint families of branches. The double arrows are the results that holds in ZFC, the dashed arrows are implications that assumes additional combinatorial axioms and the dotted arrows are implications that remain unknown. The first line stands for the least size of a set of reals which does not have the respective property. $\mathfrak q$ is defined simply as the least size of a set of reals which is not a $Q$-set. For more on $\mathfrak q$, see \cite{Brendle}. The fact that $\mathfrak b$ is the least size of a non $\lambda$-set and of a non $\sigma$-set is discussed in \cite{VanDouwen}. The least size of a non perfectly meager set is $\text{non}(\mathcal M)$ since the least size of a nonmeager set of a Polish space with no isolated points does not depend on the space.

\begin{center}
\begin{tikzcd}[column sep=0.6cm, row sep=normal]
\text{Cardinal} :                               &
\text{non}(\mathcal M)                                 & 
? 
        \arrow[l,phantom, "\geq" description]   & 
\mathfrak b 
        \arrow[l,phantom, "\geq" description]   & 
\mathfrak b 
\arrow[l,phantom, "=" description]      & 
\mathfrak q 
        \arrow[l,phantom, "\geq" description]   \\
X\subseteq 2^\omega:                    &
\begin{tabular}{c} Perfectly\\meager\end{tabular} 
        \arrow[u,leftrightarrow]
        \arrow[r,start anchor={[yshift=-0.3cm]}, dashed, bend left,degil]           & 
\text{weak }\lambda 
        \arrow[d,leftrightarrow] 
        \arrow[u,leftrightarrow] 
        \arrow[l,Rightarrow] 
        \arrow[r, "?", dotted, bend left]       &
\lambda\text{-set} 
        \arrow[d,leftrightarrow] 
        \arrow[u,leftrightarrow] 
        \arrow[l,Rightarrow] 
        \arrow[r, dashed, bend left,degil]      & 
\sigma\text{-set} 
        \arrow[d,leftrightarrow] 
        \arrow[u,leftrightarrow] 
        \arrow[l,Rightarrow] 
        \arrow[r, dashed, bend left,degil]      &
Q\text{-set} 
        \arrow[d,leftrightarrow] 
        \arrow[u,leftrightarrow] 
        \arrow[l,Rightarrow]                    \\
\Psi(\mathcal A_X):                     &
                                        &  
\aleph_0\text{-separated}         &
(\aleph_0,\mathfrak c)\text{-separated} 
        \arrow[l,Rightarrow]                    & 
\text{almost-normal} 
        \arrow[l,Rightarrow]                    & 
\text{normal} 
        \arrow[l,Rightarrow]
\end{tikzcd}
\end{center}

In view of the preceding results, we pose the following:

\begin{question}
Does there (consistently) exist a weak $\lambda$-set that is not a $\lambda$-set? 
\end{question}

\section{Further discussion and more questions:}
        As discussed in the previous sections, we know that if $\mathcal A$ is normal then $\mathcal A$ is a $Q$-set (as a subset of $\mathcal P(\omega)$). However, the converse is consistently not true, as proved in \cite{miller2002madQset} where a model with a MAD family which is a $Q$-set is constructed. Given the relations between $\sigma$-sets and almost-normality, $\lambda$-sets and pseudonormality and weak $\lambda$-sets and strongly $\aleph_0$-separatedness, it is natural to ask if similar results hold for these notions. As we have already discussed, in this sense, strongly $\aleph_0$-separated almost disjoint families are weak $\lambda$-sets and pseudonormal almost disjoint families are $\lambda$-sets.
        
        The example from \cite{miller2002madQset} mentioned above shows us that almost disjoint families which are $\lambda$-sets don't need to be pseudonormal as subsets of $\mathcal P(\omega)$ since MAD families are not pseudonormal. 
        
        However, the following two questions remain open:
        
\begin{question}
Is there a MAD almost-normal family?
\end{question}

\begin{question}
Is it true that every almost disjoint family which is a weak $\lambda$-set is also $\aleph_0$-separated?
\end{question}

Proposition \ref{weaklyseparatedluzin} grants the existence of an almost disjoint family of size $\omega_1$ which is not almost-normal. Thus, assuming $\mathfrak b>\omega_1$, we have:

\begin{cor}
$\mathfrak b>\omega_1$ implies the existence of an almost disjoint family $\mathcal A\subseteq [\omega]^{\omega}$ of size $\omega_1$ which is a $\sigma$-set but is not almost-normal. Moreover, the existence of such an almost disjoint family is consistent with CH.
\end{cor}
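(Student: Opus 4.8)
My plan is to let $\mathcal A$ be the Luzin$^*$ family of size $\omega_1$ constructed in Proposition~\ref{weaklyseparatedluzin}, whose essential feature is that it fails to be almost-normal not only in the ground model but in \emph{every} c.c.c.\ forcing extension. The $\mathfrak b>\omega_1$ half is then immediate: viewing $\mathcal A$ as a subset of the Polish space $[\omega]^\omega$, it is a set of reals of size $\omega_1$, and since $\mathfrak b$ is the least cardinality of a set of reals that is not a $\sigma$-set, the assumption $\omega_1<\mathfrak b$ makes every such set — in particular $\mathcal A$ — a $\sigma$-set. Thus $\mathcal A$ is simultaneously a $\sigma$-set and not almost-normal, which is the first assertion. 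Here it is crucial that $\mathcal A$ is not a family of branches, since for those Theorem~\ref{teo: sigma if and only if almost-normal} forces the two notions to coincide.

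For the consistency with CH I would begin in a model $V\models\mathrm{CH}$ and fix such a family $\mathcal A$, of size $\omega_1=\mathfrak c$, by Proposition~\ref{weaklyseparatedluzin}. The goal is to force $\mathcal A$ to become a $\sigma$-set by a c.c.c.\ iteration that preserves CH, reading off non-almost-normality for free. I would use the reformulation (obtained by taking complements in the definition) that $\mathcal A$ is a $\sigma$-set precisely when every relative $F_\sigma$ subset of $\mathcal A$ is a relative $G_\delta$. Accordingly I would run a finite-support iteration of length $\omega_1$ in which, at each stage, the natural $\sigma$-centered poset of \cite{Brendle} is applied to a single prescribed set $\mathcal A\setminus B$ with $B$ a relative $G_\delta$, thereby making $\mathcal A\setminus B$ a relative $G_\delta$ and hence $B$ a relative $F_\sigma$. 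A bookkeeping function enumerates along the $\omega_1$ stages all reals coding relative $G_\delta$ subsets of $\mathcal A$ that appear at any stage and arranges that each is handled at a later stage; being made $F_\sigma$ is witnessed by reals and so persists, while the trace on the fixed point set $\mathcal A$ is absolute. Since a finite-support iteration of $\sigma$-centered posets is c.c.c.\ and the whole iteration has size $\omega_1$, the continuum remains $\omega_1$, so CH is preserved.

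In the extension $V[G]$ the family $\mathcal A$ still has size $\omega_1=\mathfrak c$, every relative $G_\delta$ subset is now relative $F_\sigma$ (so $\mathcal A$ is a $\sigma$-set), and CH holds; since $V[G]$ is a c.c.c.\ extension of $V$ and $\mathcal A$ is not almost-normal in any c.c.c.\ extension, $\mathcal A$ is not almost-normal, as desired. The main obstacle I anticipate is the convergence of the iteration: one must verify that the bookkeeping catches \emph{every} relative $G_\delta$ subset of $\mathcal A$ present in $V[G]$, including those coded by reals introduced by the iteration itself, so that $\mathcal A$ genuinely becomes a $\sigma$-set while the iteration stays c.c.c.\ of size $\omega_1$. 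This is the standard reflection argument for length-$\omega_1$ finite-support c.c.c.\ iterations under preserved CH, using that every real of $V[G]$ appears at some countable stage. Note that there is no danger of inadvertently producing a $Q$-set, since CH holds in $V[G]$ and under CH there are no $Q$-sets; the iteration only ever turns $F_\sigma$ sets into $G_\delta$ sets and leaves non-$F_\sigma$ subsets untouched.
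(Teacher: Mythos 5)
Your proposal is correct, and the first half (the $\mathfrak b>\omega_1$ part) is exactly the paper's argument: take the Luzin$^*$ family of Proposition~\ref{weaklyseparatedluzin}, which has size $\omega_1$ and is not almost-normal in any c.c.c.\ extension (in particular not in $V$), and invoke the fact that $\mathfrak b$ is the least size of a non-$\sigma$-set. For the consistency with CH, however, you take a genuinely different route. The paper starts from a model of $\mathfrak b>\omega_1$ --- where the family is already a $\sigma$-set and not almost-normal --- and then collapses $\mathfrak c$ onto $\omega_1$ with the $\sigma$-closed poset $\mathrm{Fn}(\omega_1,\mathfrak c,\omega_1)$; since this forcing adds no reals, there are no new relative $G_\delta$ subsets of $\mathcal A$ and no new candidate partitioners, so both ``$\sigma$-set'' and ``not almost-normal'' persist into the CH model in one step. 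You instead start from CH and run a length-$\omega_1$ finite-support iteration of the $\sigma$-centered ``make $\mathcal A\setminus B$ a relative $G_\delta$'' posets with bookkeeping, using that every real of the final model appears at a countable stage and that relative $G_\delta$/$F_\sigma$ statements about the fixed set $\mathcal A$ are preserved upward; non-almost-normality then comes precisely from the ``not potentially almost-normal'' feature of the family, since your extension is c.c.c. Both arguments are sound. The paper's is shorter and recycles the first half of the statement; yours is self-contained from CH and implicitly proves the stronger fact that \emph{every} almost disjoint family of size $\omega_1$ can be made a $\sigma$-set by a CH-preserving c.c.c.\ forcing, at the cost of the (standard but nontrivial) reflection and bookkeeping verification. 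One small slip: near the end you say the iteration ``turns $F_\sigma$ sets into $G_\delta$ sets,'' which is backwards --- it turns relative $G_\delta$ sets into relative $F_\sigma$ sets (by making their complements in $\mathcal A$ relative $G_\delta$) --- but your substantive point, that no $Q$-set is produced, is right.
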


\begin{proof}
For the second part of the theorem, start with a model of $\mathfrak b>\mathfrak c$ and force with a $\sigma$-closed forcing poset which collapses $\mathfrak c$ onto $\omega_1$, such as $\text{Fn}(\omega_1, \mathfrak c, \mathfrak \omega_1)=\{s: s\subseteq \omega_1\times \mathfrak c \text{ is a countable partial function}\}$ ordered by reverse inclusion. Since this poset does not add reals and sequences with range in $V$, all the uncountable almost-disjoint families which are $\sigma$-sets but are not almost-normal will be preserved in $V[G]$.
\end{proof}

In some sense, there is not much we can do regarding this implication since there are models without $\sigma$-sets (\cite{MILLER1979233}, Theorem 22), so in this model ``every almost disjoint family which is a $\sigma$-set is almost-normal'' is trivially true. Still, we ask:

\begin{question}
What are the relations between $\mathcal A$ being a $\sigma$-set of $\mathcal P(\omega)$ and $\mathcal A$ being almost-normal? If there is a $\sigma$-set, is there an almost disjoint family which is a $\sigma$-set but is not almost-normal?
\end{question}

The diagram below summarizes the known implications and the open questions concerning these properties of $\Psi(A)$ and properties of $A$ as a subset of $[\omega]^\omega$.
\begin{center}
\begin{tikzcd}[column sep=1cm, row sep=1.5cm]
{\mathcal A \subseteq [\omega]^\omega:}                             & 
\text{weak }\lambda\text{-set} 
        \arrow[d,"?"', dotted, shift right=2]                               &
\lambda\text{-set} 
        \arrow[l,Rightarrow] 
        \arrow[d, dashed, shift right=2,degil]                              &
\sigma\text{-set} 
        \arrow[l,Rightarrow]
        \arrow[d, degil, dashed, shift right=2, rightarrow]                  & 
Q\text{-set} 
        \arrow[l,Rightarrow] 
        \arrow[d, dashed, shift right=2, degil]                             \\
\Psi(\mathcal A):                                                   & 
{\aleph_0\text{-separated}} 
        \arrow[u, shift right=2,Rightarrow] 
        \arrow[r, dashed, bend left=30, degil]                      & 
{\text{pseudonormal}} 
        \arrow[l,Rightarrow] 
        \arrow[u, shift right=2,Rightarrow] 
        \arrow[r, dashed, bend left=30, degil]                              & 
\text{almost-normal} 
        \arrow[l, dotted, "?"] 
        \arrow[u, "?"', dotted, rightarrow, shift right=2] 
        \arrow[r, dashed, bend left=30, degil] 
        \arrow[ll, bend left=25,Rightarrow]                                 &
\text{normal} 
        \arrow[u, shift right=2,Rightarrow] 
        \arrow[l,Rightarrow]       
\end{tikzcd}
\end{center}

Looking in a different direction, one may ask about countable paracompactness-like properties of Isbell-Mrówka spaces. It is known that for Hausdorff spaces $X$, $X$ is countably paracompact iff for every decreasing sequence $(F_n: n \in \omega)$ of closed subsets of $X$ such that $\bigcap_{n \in \omega} F_n=\emptyset$ there exists open sets $(V_n: n \in \omega)$ such that $F_n\subseteq V_n$ and $\bigcap_{n \in \omega} \cl(V_n)=\emptyset$ (e.g. see \cite[Theorem 5.2.1]{engelking1977general}).

With the definition of almost-normality in mind, it is natural to define analogous weakenings of countable paracompactness associated to regular closed sets. For example,

\begin{defin}
We say a subset $F\subseteq X$ is $\sigma$-regular closed iff $F$ is a intersection of a countable family of regular closed sets.

We say a topological space $X$ is $\sigma$-almost countably paracompact ($\sigma$-almost countably paracompact) iff for every decreasing sequence $(F_n: n \in \omega)$ of regular closed ($\sigma$-regular closed) subsets of $X$ such that $\bigcap_{n \in \omega} F_n=\emptyset$, there exists open sets $(V_n: n \in \omega)$ such that $F_n\subseteq V_n$ and $\bigcap_{n \in \omega} \cl(V_n)=\emptyset$.
\end{defin}

$\Delta$-sets are special subsets of reals associated to countable paracompactness. We say $D\subseteq 2^\omega$ is a $\Delta$-set iff for each non-increasing sequence $(H_n)_{n\in \omega}$ of subsets of $D$ with empty intersection, there exists a sequence of open sets $(V_n)_{n\in \omega}$ of $D$, with empty intersection and $H_n\subseteq V_n$. Every $Q$-set is a $\Delta$-set, and for $X\subseteq 2^\omega$, $\Psi(\mathcal A_X)$ is countably paracompact iff $X$ is a $\Delta$-set. We refer to \cite[Section 8]{Reed} for more information in these sets.

Thus, it is natural to ask if there is a class of sets of reals characterizing $\sigma$-almost countably paracompact in the associated $\Psi$-space of branches. Somewhat surprisingly, any almost disjoint family of branches has this property:

\begin{prop}
For every $X\subseteq 2^\omega$, $\Psi({\mathcal A}_X)$ is $\sigma$-almost countably paracompact.
\end{prop}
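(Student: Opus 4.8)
The plan is to push everything down to the traces on $\A_X$ and then to build the separating open sets by trimming each branch of $F_n$ at a level dictated by a \emph{diagonalized} $G_\delta$-representation of the traces. First I would record two structural facts. Writing $Y_n=\langle F_n\rangle_X$, each $F_n$ is a countable intersection of regular closed sets, so by Proposition~\ref{prop: gdelta reg closed} the set $Y_n$ is a countable intersection of $G_\delta$ sets, hence $G_\delta$ in $X$; the $Y_n$ decrease and $\bigcap_n Y_n=\emptyset$ because $\bigcap_n F_n=\emptyset$. Second, for any open $V\subseteq\psispc{\A_X}$ one has $\cl(V)\cap 2^{<\omega}=V\cap 2^{<\omega}$ (isolated points are open) and, setting $W=V\cap 2^{<\omega}$, $\cl(V)\cap\A_X=\{a_z:\ |a_z\cap W|=\omega\}$. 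Thus controlling $\bigcap_n\cl(V_n)$ amounts to controlling which nodes and which branches survive in all the $W_n=V_n\cap 2^{<\omega}$.

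Next I would set up the diagonal. Fix open sets $O^i_j\subseteq X$ with $Y_i=\bigcap_j O^i_j$ and put $G^n=\bigcap_{i\le n,\,j\le n}O^i_j$, which is open, decreasing in $n$, and satisfies $Y_n\subseteq G^n$ (using that the $Y_i$ decrease) together with the crucial $\bigcap_n G^n=\emptyset$: if $z\in G^n$ for all $n$ then, fixing $i,j$ and taking $n\ge\max(i,j)$, we get $z\in O^i_j$, whence $z\in\bigcap_i\bigcap_j O^i_j=\bigcap_i Y_i=\emptyset$. Then for each $x$ with $a_x\in F_n$ (that is, $x\in Y_n\subseteq G^n$) I let $\ell_n(x)$ be least with $[x|_{\ell_n(x)}]\cap X\subseteq G^n$, and define
\[
V_n=(F_n\cap 2^{<\omega})\ \cup\ \bigcup_{x\in Y_n}\Big(\{a_x\}\cup\{x|_m:\ m\ge \ell_n(x)\}\Big).
\]
Each $a_x\in F_n$ then has a cofinite tail of $a_x$ inside $V_n$, so $V_n$ is open, and clearly $F_n\subseteq V_n$.

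For the verification I would analyze $\cl(V_n)$ through the description above. A node $z|_m$ lies in the branch part of $W_n$ exactly when $Y_n\cap[z|_m]\neq\emptyset$ and $[z|_m]\cap X\subseteq G^n$; in particular, once $z\notin G^n$ the second condition fails for every $m$, so $a_z\notin\cl(V_n)$. Since $\bigcap_n G^n=\emptyset$, every $z$ escapes some $G^n$, and since the ``isolated'' part $F_n\cap 2^{<\omega}$ can accumulate only on branches already in $Y_n$ (infinitely many nodes of $a_z$ in the closed set $F_n$ would force $a_z\in F_n$), we conclude $\bigcap_n\cl(V_n)\cap\A_X=\emptyset$. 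For the isolated points, a node $s$ satisfies $s\notin F_n$ for all large $n$, and $s$ belongs to the branch part of $W_n$ only if $[s]\cap X\subseteq G^n$, which fails for all large $n$ once we pick any $w\in[s]\cap X$ and use $\bigcap_n G^n=\emptyset$; hence $\bigcap_n\big(\cl(V_n)\cap 2^{<\omega}\big)=\bigcap_n W_n=\emptyset$. Combining the two parts gives $\bigcap_n\cl(V_n)=\emptyset$, as required.

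The step I expect to be the main obstacle is precisely the \emph{spillover} of the cofinite branch-tails: if one trims every branch $a_x$ of $F_n$ at a uniform level (say level $n$), then the union of tails can accumulate on a limit branch $z\notin Y_n$ for \emph{all} $n$, since $\cl_X(Y_n)$ may properly contain $Y_n$ and need not have empty intersection. The whole point of cutting $a_x$ at the level $\ell_n(x)$ witnessing $x\in G^n$ is that spillover onto $z$ can happen only while $z\in G^n$, so the diagonal property $\bigcap_n G^n=\emptyset$ is exactly what is needed to defeat it; getting the indices of the $G_\delta$-representations to line up so that $\bigcap_n G^n=\emptyset$ (rather than merely $\bigcap_n Y_n=\emptyset$) is the technical heart of the argument.
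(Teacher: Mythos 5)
Your proof is correct, and it reaches the conclusion by a construction that is dual to, and genuinely different in its details from, the one in the paper. Both arguments rest on the same two pillars: Proposition~\ref{prop: gdelta reg closed}, which makes each trace $Y_n=\langle F_n\rangle_X$ a $G_\delta$ in $X$ (being a countable intersection of traces of regular closed sets), and the elementary fact that a decreasing sequence of $G_\delta$ sets with empty intersection admits open expansions $G^n\supseteq Y_n$ with $\bigcap_n G^n=\emptyset$ --- equivalently, that the $F_\sigma$ complements admit a closed refinement covering $X$. From there the routes diverge. The paper works on the complement side: it picks closed $K_i\subseteq X\setminus Y_i$ with $\bigcup_i K_i=X$, assembles closed sets $L_i$ inside $U_i=\Psi(\mathcal A_X)\setminus F_i$ whose interiors cover the space, and sets $V_i=\Psi(\mathcal A_X)\setminus L_i$, so that $\bigcap_i \cl(V_i)\subseteq \Psi(\mathcal A_X)\setminus\bigcup_i\inter(L_i)=\emptyset$. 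You instead build each $V_n$ directly as $F_n\cap 2^{<\omega}$ together with branch tails trimmed at the level where the cone $[x|_{\ell_n(x)}]\cap X$ enters $G^n$, and verify by hand that a branch $a_z$ can meet $W_n=V_n\cap 2^{<\omega}$ infinitely often only while $z\in G^n$. Your version is longer but more transparent: the ``spillover'' of tails onto limit branches that you isolate and defeat via $\bigcap_n G^n=\emptyset$ is exactly the content compressed into the paper's ``it is now straightforward to verify,'' and your citation of Proposition~\ref{prop: gdelta reg closed} is the apt one for the $F_\sigma/G_\delta$ step. Two cosmetic points, neither affecting correctness: your use of that proposition tacitly needs the observation that a regular closed $F$ equals $\cl(\inter(F)\cap 2^{<\omega})$ by density of $2^{<\omega}$, and a node $s$ with $[s]\cap X=\emptyset$ should be disposed of separately (it never lies in the branch part of any $W_n$ since $Y_n\cap[s]=\emptyset$).
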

\begin{proof}
Let $\mathcal A={\mathcal A}_X$ and $Y=\Psi(\mathcal A_X)$. Fix a decreasing sequence $(F_n: n \in \omega)$ of $\sigma$-regular closed subsets of $\Psi(\mathcal A)$. For each $i \in \omega$, let $(F_{i, n}: n \in \omega)$ be a family of regular closed sets such that $F_i=\bigcap_{n \in \omega} F_{i, n}$.

Let $U_i=Y\setminus F_i$ and $Z_i=\{x \in X: A_x\in U_i\}$. Notice that:

$$Z_i=\{x\in X:A_x \in Y\setminus\bigcap_{n \in \omega}F_{i, n}\}=\bigcup_{n \in \omega}\{x\in X:A_x \in Y\setminus F_{i, n}\}$$

By Proposition \ref{prop: fsigma sse separa}, each $Z_i$ is an $F_\sigma$, so there exists a sequence $(K_i: i \in \omega)$ of closed subsets of $X$ such that for each $i$, $K_i\subseteq Z_i$ and $\bigcup_{i \in \omega}K_i=\bigcup_{i \in \omega}Z_i=X$.

Write $2^{<\omega}=\{s_k: k \in \omega\}$. Let $\sigma:\omega\rightarrow \omega$ be strictly increasing such that for each $k \in \omega$, $s_k \in U_{\sigma(k)}$.

For each $i \in \omega$, let $L_i=\{A_x: x \in K_i\}\cup(\{x|_n: n \in \omega, \,x \in X\}\cap U_i)\cup\{s_k: \sigma(k)=i\}$.

It is now straightforward to verify that $\bigcup_{i \in \omega}\inter (L_i)=Y$, that $L_i\subseteq U_i$ and that $L_i$ is closed, so by letting $V_i=Y\setminus L_i$ for each $i \in \omega$ the proof is complete.
\end{proof}

\begin{question}
Is every Isbell-Mrówka space $\sigma$-almost countably paracompact?
\end{question}

\section{Acknowledgements}
The first author was funded by FAPESP (Fundação de Amparo à Pesquisa do Estado de São Paulo, process number 2017/15502-2). 

The second author was funded by CNPq (Conselho Nacional de Desenvolvimento Científico e Tecnológico, process number 141881/2017-8).

The third author acknowledges the support from NSERC.

The first author thanks Professor Artur Hideyuki Tomita for early discussions regarding this paper.

\bibliographystyle{plain}

\bibliography{bibliografia}

\end{document}